\numberwithin{equation}{section}
\newcommand{\Cc}{\mathcal{C}}
\newcommand{\Hc}{\mathcal{H}}
\newcommand{\Oc}{\mathcal{O}}
\newcommand{\Pc}{\mathcal{P}}
\newcommand{\Sc}{\mathcal{S}}
\newcommand{\C}{\mathbb{C}}
\newcommand{\E}{\mathbb{E}}
\newcommand{\N}{\mathbb{N}}
\newcommand{\Pb}{\mathbb{P}}
\newcommand{\R}{\mathbb{R}}
\newcommand{\Norm}[1]{\left\lVert#1\right\rVert}
\newcommand{\Abs}[1]{\left|#1\right|}
\newcommand{\Ip}[1]{\left\langle #1 \right\rangle}
\newcommand{\Indi}[1]{\mathbbm{1}_{#1}}
\newtheorem{thm}{Theorem}[section]
\newtheorem{lemma}[thm]{Lemma}
\newcounter{dummy} \numberwithin{dummy}{section}
\newtheorem{Definition}[dummy]{Definition}
\newtheorem{Proposition}[dummy]{Proposition}
\newtheorem{Theorem}[dummy]{Theorem}
\newtheorem{Corollary}[dummy]{Corollary}
\newtheorem{Lemma}[dummy]{Lemma}
\newtheorem{Remark}[dummy]{Remark}
\def\1{{\rm l}\hskip -0.21truecm 1}
\begin{document}
\title[Collision of eigenvalues for matrix-valued processes]{Collision of eigenvalues for matrix-valued processes}
 \date{\today}

\author{Arturo Jaramillo}
\address{Arturo Jaramillo: Department of Mathematics, University of Kansas,   Lawrence, KS 66045, USA.}
\email{jagil@ku.edu}
\author{David Nualart} \thanks{D. Nualart is supported by the NSF grant DMS15112891}
\address{David Nualart: Department of Mathematics, University of Kansas,   Lawrence, KS 66045, USA.}
\email{nualart@ku.edu}
\date{\today}
\keywords{Random matrices, fractional Brownian motion, Gaussian orthogonal ensemble, Gaussian unitary ensemble, hitting probabilities.}
\begin{abstract}
We examine the probability that at least two eigenvalues of an Hermitian matrix-valued Gaussian process, collide. In particular, we determine sharp conditions under which such probability is zero. 
As an application, we show that the eigenvalues of a real symmetric matrix-valued fractional Brownian motion of Hurst parameter $H$, collide when $H<1/2$ and don't collide when $H>\frac{1}{2}$, while those of a complex Hermitian fractional Brownian motion collide when $H<\frac{1}{3}$ and don't collide when $H>\frac{1}{3}$. Our approach is based on the relation between hitting probabilities for Gaussian processes with the capacity and Hausdorff dimension of measurable sets.
\end{abstract}
\maketitle

\section{Introduction}
  For $r\in\N$ fixed, consider  a centered Gaussian  random field  $\xi = \{\xi(t); t\in \R_{+}^r\}$, defined in a probability space $(\Omega, \mathcal{F}, \mathbb{P})$, with covariance function
given by  
\begin{align*}
\E\big[\xi(s)\xi(t)\big]
=R(s,t),
\end{align*}
for some non-negative definite function $R:(\R_+^r)^2\rightarrow\R$.
 Let $\{ \xi_{i,j}, \eta_{i,j} ; i,j\in\N\}$,  be a family of independent copies of $\xi$. For $\beta\in\{1,2\}$ and $d\in\N$, with $d\geq 2$ fixed, consider the matrix-valued process $X^{\beta}=\{X_{i,j}^{\beta}(t) ; t\in\R_{+}^{r},\ 1\leq i,j\leq d\}$, defined by
\begin{align}\label{eq:Xdef}
X_{i,j}^{\beta}(t)
  &=\left\{\begin{array}{ll}\xi_{i,j}(t)+\textbf{i}\Indi{\{\beta=2\}}\eta_{i,j}(t)&\text{ if }\ i<j\\(\Indi{\{\beta=1\}}\sqrt{2}+\Indi{\{\beta=2\}})\xi_{i,i}(t)+\textbf{i}\Indi{\{\beta=2\}}\eta_{i,i}(t)&\text{ if }\ i=j\\\xi_{i,j}(t)-\textbf{i}\Indi{\{\beta=2\}}\eta_{i,j}(t)&\text{ if }\ j<i.\end{array}\right.
\end{align}
In accordance to the type of symmetry of $X^{\beta}(t)$, we will refer to $X^{1}$ and $X^{2}$ as the Gaussian orthogonal ensemble process (GOE) and Gaussian unitary ensemble process (GUE), respectively. Let $A^\beta$ be a fixed Hermitian deterministic matrix, such that $A^\beta$ has real entries in the case $\beta=1$, and complex entries in the case $\beta=2$.

Consider the set  of the ordered eigenvalues $\lambda_{1}^{\beta}(t)\geq\cdots\geq\lambda^{\beta}_{d}(t)$ of 
\begin{equation} \label{ybeta}
Y^{\beta}(t):=A^{\beta}+X^{\beta}(t).
\end{equation}
The purpose of this paper is to determine necessary and sufficient conditions under which, with probability one, we have $\lambda_{1}^{\beta}(t)>\cdots>\lambda_{d}^{\beta}(t)$ for all $t$ belonging to a suitable rectangle of $\R_+^{r}$.

The matrix-valued process $Y^{\beta}$ was first studied by Dyson for $\beta=r=1$, in the case where $\xi$ is a standard Brownian. In particular, he proved that the processes $\lambda_{1}^{1},\dots, \lambda_{d}^{1}$ satisfy a system of stochastic differential equations with non-smooth diffusion coefficients, as well as the non-collision property
\begin{align}\label{eq:noncollision0}
\Pb\left[\lambda_{i}^1(t)=\lambda_{j}^1(t)\ \ \text{for some }\ t>0\ \text{and}\ 1\leq i<j\leq n\right]=0.
\end{align}
For a more recent treatment of these results, see \cite[Section~4.3]{AnGuZe}.

Afterwards, Nualart and P\'erez-Abreu used Young's theory of integration, to prove that in the case where $\beta=r=1$ and $\xi$ is a Gaussian process with H\"older continuous parths larger than $\frac{1}{2}$, relation \eqref{eq:noncollision0} holds. This result can be applied to the case where $X^1$ is a fractional Brownian matrix with Hurst parameter $\frac{1}{2}<H<1$. Namely, when $\xi=\{ \xi(t) ; t\ge 0\}$ is  centered Gaussian processes with  covariance
\begin{align}\label{eq:covfbm}
R(s,t)
  &=\frac{1}{2}(t^{2H}+s^{2H}-\Abs{t-s}^{2H}).
\end{align}
In this manuscript we prove, among other things, that the results presented in \cite{NuPe} are sharp, in the sense that for $H<1/2$, the eigenvalues  $\lambda_1^1,\dots, \lambda_{d}^{1}$ collide with positive probability, and with probability one if $A^1=0$. We also give an alternative proof of the results obtained by Nualart and P\'erez-Abreu in \cite{NuPe}. In the Brownian motion case $H=\frac{1}{2}$, the method we apply reduces to the one presented in Section 4.9 of the book \cite{McKean} by Mckean. On the other hand, we obtain the surprising results that for the fractional Hermitian matrix $X^2$, the eigenvalues   $\lambda_1^2,\dots, \lambda_{d}^{2}$ do not collide when $H>\frac 13$ and collide with positive probability (or with probability one if $A^2=0$), when $H<\frac 13$. The case $H=\frac 13$ cannot be handled with the techniques used in this paper  and remains an open problem.

  When $\psi(s,t)$ is of the form \eqref{eq:covfbm} and $\beta=1$, the non-collision property is of great interest, since it is a necessary condition for characterizing $(\lambda_{1}^1,\dots,\lambda_{d}^1)$ as the unique solution of a Young integral equation (in the case where $H>\frac{1}{2}$), or as an It\^o stochastic differential equation (in the case $H=\frac{1}{2}$). We refer the reader to \cite{AnGuZe} and \cite{PaPeGa} for a proof of such characterizations.

 The goal of this manuscript is to investigate the probability of collision of the eigenvalues $\lambda_{1}^\beta,\dots, \lambda_{d}^\beta$, for $\xi$ belonging to a class of processes that includes the complex Hermitian and real symmmetric fractional Brownian motion of Hurst parameter $H\neq \frac{1}{2}$. The proofs of our main results are based on estimations of hitting probabilities for Gaussian processes, as well as some geometric properties of the set of degenerate matrices. This approach is different from the  methodology used in \cite{NuPe} and \cite{AnGuZe}, where the process $(\lambda_{1}^1,\dots,\lambda_{d}^1)$ is studied by means of stochastic integral techniques. 

\section{Main results}\label{eq:mainresults}
As mentioned before, the ideas presented in this manuscript rely heavily on the the hitting probability estimations presented in \cite{BiLaXi}. In order to apply such results, we will assume that the there exists a multiparameter index $(H_{1},\dots, H_{r})\in (0,1)^{r}$, and an interval 
\begin{align}\label{eq:Idef}
I=[a,b]:=\prod_{j=1}^{r}[a_{j},b_{j}]\subset\R_{+}^{r},
\end{align}
with $a=(a_{1},\dots, a_{r}),b=(b_{1},\dots, b_{r})\in\R_{+}^{r}$ satisfying $a_{i}\leq b_{i}$ for $1\leq i\leq r$, such that the following technical conditions hold:\\

\noindent\textbf{(H1)} There exist strictly positive and finite constants $c_{2,1},c_{2,2}$ and $c_{2,3}$ such that $\E\left[\xi(t)^{2}\right]\geq c_{2,1}$ for all $t\in I$ and 
\begin{align*}
c_{2,2}\sum_{j=1}^{r}\Abs{s_{j}-t_{j}}^{2H_{j}}\leq\E\left[(\xi(s)-\xi(t))^2\right]
\leq c_{2,3}\sum_{j=1}^{r}\Abs{s_{j}-t_{j}}^{2H_{j}},
\end{align*}
for $s,t\in I$ of the form $s=(s_{1},\dots, s_{r})$ and $t=(t_{1},\dots, t_{r})$.\\

\noindent \textbf{(H2)} There exists a constant $c_{2,4}>0$ such that for all $s=(s_{1},\dots, s_r),t=(t_{1},\dots, t_{r})\in I,$
\begin{align*}
\textup{Var}\left[\xi(t)\ |\ \xi(s)\right]
  &\geq c_{2,4}\sum_{j=1}^{r}\Abs{s_{j}-t_{j}}^{2H_{j}},
\end{align*}
where $\textup{Var}\left[\xi(t)\ |\ \xi(s)\right]$ denotes the conditional variance of $\xi(t)$ given $\xi(s)$.\\

 The collection of random fields satisfying conditions \textbf{(H1)} and \textbf{(H2)} includes, among others, the fractional Brownian sheet and the solutions to the stochastic heat equation driven by space-time white noise. Our main results are Theorem \ref{thm:main} and Corollary \ref{cor:main} below. The proofs will be presented in Section \ref{sec:proofs}.
\begin{Theorem}\label{thm:main}
Define $Q:=\sum_{j=1}^{r}\frac{1}{H_{j}}$. Then, for $\beta=1,2$, we have the following results:
\begin{enumerate}
\item[(i)] If $Q<\beta+1$,  
\begin{align}\label{eq:main100i}
\Pb\left[\lambda_{i}^{\beta}(t)=\lambda_{j}^{\beta}(t)\ \ \text{for some }\ t\in I\ \text{and}\ 1\leq i<j\leq n\right]=0.
\end{align}
\item[(ii)] If $Q>\beta+1$,  
\begin{align}\label{eq:main100}
\Pb\left[\lambda_{i}^{\beta}(t)=\lambda_{j}^{\beta}(t)\ \ \text{for some }\ t\in I\ \text{and}\ 1\leq i<j\leq n\right]>0.
\end{align}
\end{enumerate}
\end{Theorem}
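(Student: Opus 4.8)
The plan is to reduce the collision question to a hitting-probability problem for the Gaussian random field $X^\beta$ and then invoke the capacity/Hausdorff-dimension criterion of Biermé--Lacaux--Xiao. The key observation is that two eigenvalues of $Y^\beta(t)$ coincide if and only if the Hermitian matrix $Y^\beta(t)$ lies in the set $\mathcal{D}_\beta$ of Hermitian matrices (real symmetric when $\beta=1$) with a repeated eigenvalue. A classical fact, going back to von Neumann--Wigner, is that $\mathcal{D}_\beta$ is a real-algebraic subvariety of the space of Hermitian matrices of codimension exactly $\beta+1$: codimension $2$ for real symmetric matrices and codimension $3$ for complex Hermitian matrices. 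Moreover $\mathcal{D}_\beta$ is a finite union of smooth manifolds, the top stratum being the matrices with exactly one eigenvalue of multiplicity $2$. Thus, viewing $X^\beta = A^\beta + X^\beta(t)$ as a Gaussian field taking values in $\R^N$ (with $N = d^2$ for $\beta=2$, $N = d(d+1)/2$ for $\beta=1$, counting independent real coordinates), the collision event is exactly $\{\exists t\in I: Y^\beta(t)\in \mathcal{D}_\beta\}$, and $\mathcal{D}_\beta$ has Hausdorff dimension $N-(\beta+1)$.

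First I would record the geometry of $\mathcal{D}_\beta$ carefully: parametrize the top stratum by (eigenvalue data) $\times$ (flag data) and check that it is a smooth embedded submanifold of codimension $\beta+1$ whose closure adds only lower-dimensional pieces; in particular $\dim_{\mathcal H}\mathcal{D}_\beta = N - (\beta+1)$ and $\mathcal{D}_\beta$ has locally finite $(N-\beta-1)$-dimensional Hausdorff measure on compact pieces. Second, I would verify that the $\R^N$-valued field $t\mapsto X^\beta(t)$ inherits conditions \textbf{(H1)}--\textbf{(H2)} componentwise from $\xi$ (the entries are independent copies, up to the harmless constant factors in \eqref{eq:Xdef}), so that the vector field satisfies the hypotheses needed for the hitting-probability results in \cite{BiLaXi}. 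Those results state, roughly, that for such a Gaussian field and a Borel set $F\subset\R^N$, $\Pb[X(I)\cap F\neq\emptyset]>0$ iff $F$ has positive capacity of the appropriate order, and $=0$ when the Hausdorff dimension of $F$ is small enough; the relevant threshold is governed by $Q=\sum_j 1/H_j$, which plays the role of the "dimension" of the time domain seen by the field. The dividing line is precisely whether $Q$ is smaller or larger than the codimension $\beta+1$ of the target set.

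For part (i), when $Q<\beta+1$: here I would use the upper bound in \cite{BiLaXi}, which gives $\Pb[X^\beta(I)\cap F\neq\emptyset]=0$ whenever $\mathrm{dim}_{\mathcal H}(F) < N-Q$. Since $\dim_{\mathcal H}\mathcal D_\beta = N-(\beta+1) < N-Q$, the hitting probability is zero. One subtlety: the theorem as stated refers to the event that some pair of the \emph{ordered} eigenvalues coincide on $I$; this is exactly $\{X^\beta(I)\cap \mathcal D_\beta\neq\emptyset\}$ up to translation by $A^\beta$, and translating a Borel set does not change its Hausdorff dimension, so the argument is unaffected by $A^\beta$. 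For part (ii), when $Q>\beta+1$: now I want a \emph{lower} bound on the hitting probability, which requires exhibiting a probability measure supported on (a compact piece of) $\mathcal D_\beta$ with finite energy of order $N-Q$ — i.e. showing $\mathrm{Cap}_{N-Q}(\mathcal D_\beta)>0$. Since $\mathcal D_\beta$ contains a smooth submanifold of dimension $N-(\beta+1) > N-Q$, the surface measure on a small relatively-open chart of the top stratum is a natural candidate; its Riesz energy of order $\alpha:= N-Q < \dim$ is finite because the integral $\iint |x-y|^{-\alpha}$ against a smooth compactly supported density on a manifold of dimension strictly larger than $\alpha$ converges. Feeding this into the lower bound of \cite{BiLaXi} gives $\Pb[X^\beta(I)\cap\mathcal D_\beta\neq\emptyset]>0$.

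The main obstacle I anticipate is twofold. First, the geometric input: proving cleanly that $\mathcal D_\beta$ has the claimed codimension and is a finite union of manifolds — the count "codimension $2$ for symmetric, $3$ for Hermitian" is standard (the space of traceless symmetric $2\times2$ matrices is $2$-dimensional, of traceless Hermitian $2\times2$ matrices is $3$-dimensional, and coincidence of two eigenvalues forces the restriction to the relevant eigenplane to be scalar), but one must be careful to handle all multiplicity patterns, not just one double eigenvalue, and to argue that the lower strata genuinely have smaller dimension so they do not affect $\dim_{\mathcal H}$ or the capacity bound. Second, one must check that the precise formulation of the BiLaXi hitting-probability estimates applies to our $\R^N$-valued field on a rectangle $I$ with anisotropic indices $(H_1,\dots,H_r)$ and with the target being a general compact set rather than a single point; matching the hypotheses \textbf{(H1)}--\textbf{(H2)} to their assumptions, and confirming that the threshold there is $N-Q$ with $Q=\sum 1/H_j$, is the technical heart of the reduction. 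Once those two pieces are in place, parts (i) and (ii) follow by applying the upper and lower bounds respectively to $F=\mathcal D_\beta - A^\beta$.
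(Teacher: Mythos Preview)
Your proposal is correct and follows essentially the same approach as the paper: reduce the collision event to $X^\beta$ hitting the degenerate set, establish that this set has codimension $\beta+1$ (the paper does this via explicit parametrizations in Propositions~\ref{lem:geometricaux1}--\ref{lem:geometricaux2GUE} rather than citing von Neumann--Wigner), and then apply the Bierm\'e--Lacaux--Xiao dimension criterion. The only minor difference is that for part~(ii) the paper invokes Corollary~\ref{cor:BiLaXi} directly from the Hausdorff-dimension lower bound, whereas you propose constructing an explicit finite-energy measure; both are valid, and your capacity computation is in fact what underlies the corollary.
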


In the case where $\xi$ is a one-parameter fractional Brownian motion with Hurst parameter $H\in(0,1)$, we prove a generalization of Theorem \ref{thm:main}, where in addition to characterizing the eigenvalue collision property in terms of $H$, for $H\neq \frac{1}{2}$, we provide  conditions under which such collision occurs instantly (see equation \eqref{eq:fbmprobone} below).
\begin{Corollary}\label{cor:main}
If $\xi=\{ \xi(t) ; t\ge 0\}$ is a fractional Brownian motion with Hurst parameter   $0<H<1$ and  $I= [a,b]$, where $0<a<b$.  we have the following  results:
\begin{enumerate}
\item[(i)] If $\frac{1}{1+\beta}<H<1$,
\begin{align}\label{eq:main10}
\Pb\left[\lambda_{i}^{\beta}(t)=\lambda_{j}^{\beta}(t)\ \ \text{for some }\ t\in I\ \text{and}\ 1\leq i,j\leq n\right]=0.
\end{align}
\item[(ii)] If $0<H<\frac{1}{1+\beta}$,
\begin{align}\label{eq:main1}
\Pb\left[\lambda_{i}^{\beta}(t)=\lambda_{j}^{\beta}(t)\ \ \text{for some }\ t\in I\ \text{and}\ 1\leq i,j\leq n\right]>0.
\end{align}
Moreover, if either $A^{\beta}=0$ or the spectrum of $A^{\beta}$ has cardinality $d-1$, then 
\begin{align}\label{eq:fbmprobone}
\Pb\left[\lambda_{i}^{\beta}(t)=\lambda_{j}^{\beta}(t)\ \ \text{for some }\ t >0\ \text{and}\ 1\leq i,j\leq n\right]=1.
\end{align}
\end{enumerate}
\end{Corollary}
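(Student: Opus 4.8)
The plan is to read off parts (i) and the first assertion of (ii) directly from Theorem~\ref{thm:main}, and to establish the probability-one statement \eqref{eq:fbmprobone} through a zero-one law at time $0$. For (i) and the first part of (ii), apply Theorem~\ref{thm:main} with $r=1$ and $H_1=H$ on $I=[a,b]$: then $Q=1/H$, so $Q<\beta+1\iff H>\tfrac1{1+\beta}$ and $Q>\beta+1\iff H<\tfrac1{1+\beta}$. What remains is to verify that a fractional Brownian motion satisfies \textbf{(H1)} and \textbf{(H2)} on $[a,b]$ when $0<a<b$. Condition \textbf{(H1)} is immediate from $\E[\xi(t)^2]=t^{2H}\ge a^{2H}$ and $\E[(\xi(s)-\xi(t))^2]=\Abs{s-t}^{2H}$; condition \textbf{(H2)} is the two-point local nondeterminism of fractional Brownian motion, and the constant can be taken uniform over $[a,b]$ precisely because $a>0$ keeps us away from the degeneracy of $R$ at the origin.

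For \eqref{eq:fbmprobone} I would first reduce the claim to showing that, almost surely, $0$ is an accumulation point of the closed set of collision times $\mathcal{Z}:=\{t\ge0:\lambda_i^\beta(t)=\lambda_j^\beta(t)\text{ for some }i\neq j\}$; in both cases of the hypothesis $Y^\beta(0)=A^\beta$ has a repeated eigenvalue, so $0\in\mathcal Z$, and an accumulation of $\mathcal Z$ at $0$ forces collisions at arbitrarily small positive times. The event that $0$ is an accumulation point of $\mathcal Z$ lies in the germ $\sigma$-field $\bigcap_{\varepsilon>0}\sigma(\xi_{i,j}(t),\eta_{i,j}(t):0<t\le\varepsilon)$; the Lamperti transformation $u\mapsto e^{-Hu}\xi(e^u)$ turns each driving fractional Brownian motion into a stationary Gaussian process whose covariance vanishes at infinity, hence ergodic, so this germ $\sigma$-field — which coincides with its tail field at $-\infty$ — is trivial. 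It therefore suffices to prove that the event in question has positive probability.

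If $A^\beta=0$, then $X^\beta$ is $H$-self-similar, hence $\{\lambda_i^\beta(ct)\}_{i,t}$ has the same law as $\{c^H\lambda_i^\beta(t)\}_{i,t}$, so $\Pb[\mathcal Z\cap[2^{-k}a,2^{-k}b]\neq\emptyset]=\Pb[\mathcal Z\cap[a,b]\neq\emptyset]=:p_0>0$ for every $k\ge1$ by part (ii); since these intervals shrink to $0$, the reverse Fatou lemma applied to $\limsup_k\{\mathcal Z\cap[2^{-k}a,2^{-k}b]\neq\emptyset\}$ shows that $0$ is an accumulation point of $\mathcal Z$ with probability at least $p_0>0$, and the zero-one law upgrades this to $1$. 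If instead the spectrum of $A^\beta$ has cardinality $d-1$, let $\mu$ be its double eigenvalue, $V_\mu$ the associated plane, and $P$ the orthogonal projection onto $V_\mu$. For $t$ in a random neighbourhood of $0$, exactly two eigenvalues of $Y^\beta(t)$ lie near $\mu$, and by Kato's analytic perturbation theory they coincide with the eigenvalues of a $2\times2$ Hermitian matrix $M(t)$ depending real-analytically on $Y^\beta(t)$, with $M(0)=\mu\,\mathrm{Id}_{V_\mu}$ and differential $P(\cdot)P|_{V_\mu}$ at $A^\beta$. By the $O(d)$- (resp. $U(d)$-) invariance of the law of the process $X^\beta$, the linear term $\widetilde X(t):=PX^\beta(t)P|_{V_\mu}$ is itself a $2\times2$ GOE (resp. GUE) fractional Brownian matrix, and $M(t)-\mu\,\mathrm{Id}=\widetilde X(t)+\mathcal R(t)$ with $\Norm{\mathcal R(t)}\le C_\omega\Norm{X^\beta(t)}^2$. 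Rescaling time by $\varepsilon$ and the matrix by $\varepsilon^{-H}$ and using self-similarity, $\Pb[\mathcal Z\cap[\varepsilon a,\varepsilon b]\neq\emptyset]$ equals the probability that the $2\times2$ matrix $\widetilde X(s)+\varepsilon^H\mathcal R_\varepsilon(s)$ has a repeated eigenvalue for some $s\in[a,b]$, where $\sup_{s\in[a,b]}\Norm{\mathcal R_\varepsilon(s)}$ is bounded in probability uniformly in $\varepsilon\le1$. As $\varepsilon\downarrow0$ the perturbation disappears, and by part (ii) with $d=2$ the unperturbed matrix process $\widetilde X$ has a repeated eigenvalue somewhere on $[a,b]$ with probability $p_0>0$; a stability estimate for the hitting of the origin by the $\R^{\beta+1}$-valued process formed from the diagonal difference and the off-diagonal entry of $\widetilde X$ — valid below the critical codimension $\beta+1<1/H$ — then yields $\liminf_{\varepsilon\downarrow0}\Pb[\mathcal Z\cap[\varepsilon a,\varepsilon b]\neq\emptyset]>0$, and once more the reverse Fatou lemma and the zero-one law conclude.

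The main obstacle is exactly this stability estimate in the case $\#\mathrm{spec}(A^\beta)=d-1$. Uniform closeness of $M(t)-\mu\,\mathrm{Id}$ to the Gaussian matrix $\widetilde X(t)$ is not enough by itself, since a continuous path in $\R^{\beta+1}$ with $\beta+1\ge2$ that passes through the origin can be pushed off it by an arbitrarily small perturbation; one must instead exploit the fine local structure of fractional Brownian motion near its zero set — its local nondeterminism, equivalently a lower-semicontinuity property of the Bessel–Riesz-type capacity governing the hitting probabilities in \cite{BiLaXi} — to ensure that the $O(\Norm{X^\beta(t)}^2)$ corrections cannot destroy the collision. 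The triviality of the germ $\sigma$-field used in the zero-one law, while classical, is a secondary point that also deserves a careful word.
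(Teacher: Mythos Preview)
Your treatment of (i) and the first assertion of (ii) via Theorem~\ref{thm:main} is exactly what the paper does, including the verification of \textbf{(H1)}--\textbf{(H2)} on $[a,b]$ with $a>0$. For \eqref{eq:fbmprobone} in the case $A^\beta=0$, your self-similarity argument is also essentially the paper's. Your zero-one law via the Lamperti transformation is a legitimate alternative to the paper's route, which instead writes $\xi$ as a Volterra integral $\int_0^t K_H(s,t)\,dW(s)$ against standard Brownian motion and invokes Blumenthal's law for $W$; the paper's version is shorter and avoids the ergodicity discussion.

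The substantive divergence, and the genuine gap, is in the case $|\textbf{Sp}(A^\beta)|=d-1$. You attempt a perturbative reduction to a $2\times2$ block and then need, as you yourself flag, a stability estimate saying that an $O(\varepsilon^H)$ deterministic-looking perturbation of an $\R^{\beta+1}$-valued fractional Brownian path cannot destroy hitting of the origin on $[a,b]$. This is not a minor technicality: for $\beta+1\ge2$ a generic $C^0$-small perturbation \emph{can} push a path off a point, and turning the heuristic ``lower-semicontinuity of capacity under smooth perturbations'' into a rigorous statement here would require a separate argument at least as involved as the original problem. The paper avoids this entirely. It never reduces to $d=2$; instead it uses the manifold structure of $\Sc_{deg}^d$ (resp.\ $\Hc_{deg}^d$) near $A^\beta$ established in Proposition~\ref{lem:geometricaux2} (resp.\ \ref{lem:geometricaux2GUE}) to produce, for each $T<1$, an explicit probability measure on $T^{-H}(\Sc_{deg}^d-A^\beta)$ --- the pushforward of normalized Lebesgue on a small cube through a local chart $G$ --- and then bounds the $(n_\beta(d)-\tfrac1H)$-energy of this measure directly. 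The mean value theorem and injectivity of $DG$ near the origin give $T^{-H}\|G(T^Hx)-G(T^Hy)\|\ge\delta\|x-y\|$ uniformly in $T$, so the energy integral is bounded above independently of $T$, hence the capacity and therefore the hitting probability on $(0,T]$ is bounded below by a constant $\delta'>0$ uniformly in $T$. Taking $T\downarrow0$ and applying the zero-one law finishes. This is precisely why the paper invests in Propositions~\ref{lem:geometricaux2}--\ref{lem:geometricaux2GUE}: they supply the chart $G$ with injective derivative that makes the capacity computation go through uniformly, replacing your unproved stability estimate with an explicit calculation.
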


\begin{Remark}
Combining Corollary \ref{cor:main} with \cite[Section~4.3]{AnGuZe}, we conclude that the condition $H\geq \frac{1}{2}$ is   necessary and sufficient  for the non-collision property of real symmetric fractional Brownian matrices.  On the other hand, the critical value for the collision property for the fractional  GUE is $H=\frac 13$.
Nevertheless, our proof of Corollary \ref{cor:main} is not valid for the critical value $H=\frac{1}{1+\beta}$. Thus, if $\beta=2$ and $H=\frac{1}{3}$, the non-collision property for $\lambda_{1}^{2},\dots, \lambda_{d}^{2}$ is still an open problem. 
\end{Remark}
The rest of the paper is organized as follows. Section 3 contains the results from hitting probabilities for Gaussian fields that we will use throughout the paper. In Section 4, we describe some geometric properties of the set of degenerate Hermitian matrices of dimension $d$; namely, the Hermitian  matrices with at least one repeated eigenvalue. Finally, in Section 5 we prove Theorem \ref{cor:main} and Corollary \ref{cor:main}.

\section{Hitting probabilities}\label{sec:preliminaries}
 In this section we present some results on hitting probabilities for Gaussian fields and their relation to the capacity and Hausdorff dimension of Borel sets. We will closely follow the work by Bierm\'e, Lacaux and Xiao presented in \cite{BiLaXi}, and we refer the interested reader to \cite{BiLaXi, Xiao, XiaoPDE} for a detailed treatment of the theory of hitting probabilities.

  For $n\in\N$, let $W=\{(W_{1}(t),\dots,W_{n}(t)) ; t\in\R_{+}^{r}\}$ be an $n$-dimensional Gaussian field, whose entries are independent copies of $\xi$. In the sequel, for every $q>0$ and any Borel set $F\subset\R^{n}$, $\mathcal{H}_{q}(F)$ will denote the $q$-dimensional Hausdorff measure of $F$ and $\Cc_{\alpha}(F)$ will denote the Bessel-Riesz capacity of order $\alpha$ of $F$, defined by 
\begin{align}\label{eq:capacitydef}
\Cc_{\alpha}(F)
  &:=\bigg(\inf_{\mu\in\Pc(F)}\int_{\R^{n}}\int_{\R^{n}}f_{\alpha}(\|x-y\|)\mu(dx)\mu(dy)\bigg)^{-1},
\end{align}
where $\Pc(F)$ is the family of probability measures supported in $F$ and the function $f_{\alpha}:\R_{+}\rightarrow\R_{+}$ is defined by 
\begin{align}\label{eq:falphadef}
f_{\alpha}(r)
  &:=\left\{\begin{array}{ll}r^{-\alpha}   &\ \text{ if } \alpha>0,\\\log\big(\frac{e}{r\wedge 1}\big)   &\ \text{ if } \alpha=0,\\1   &\ \text{ if } \alpha<0.\end{array}\right.
\end{align}
Define as well the Hausdorff dimension ${\rm dim}_{H}(F)$, by
$$
{\rm dim}_{H}(F):=\inf\{q>0\ |\ \mathcal{H}_{q}(F)=0\}.
$$
We refer the reader to \cite{Falc,Kah} for basic properties of the Hausdorff measure and capacity of Borel sets. The following results, presented in \cite[Theorem~2.1]{BiLaXi}, will be used to prove Theorem \ref{thm:main}.

\begin{Theorem}[Bierm\'e, Lacaux and Xiao]  \label{BiLaXicapacity} 
Consider an interval $I$ of the form \eqref{eq:Idef}. If $F\subset\R^{n}$ is a Borel set, then there exist constants $c_1,c_2>0$, such that
\begin{align*}
c_1\Cc_{n-Q}(F)
  &\leq \Pb\left[W^{-1}(F)\cap I\neq \emptyset\right]
	\leq c_2\mathcal{H}_{n-Q}(F),
\end{align*}
where $Q=\sum_{j=1}^r\frac{1}{H_{j}}$.
\end{Theorem}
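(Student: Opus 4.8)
The plan is to prove the two inequalities separately: the upper bound by a covering argument resting on the definition of Hausdorff measure, and the lower bound by the energy (second--moment) method resting on the definition of Bessel--Riesz capacity. Throughout I would use the intrinsic pseudo-metric $\rho(s,t):=\big(\sum_{j=1}^{r}|s_j-t_j|^{2H_j}\big)^{1/2}$ of $\xi$, which by \textbf{(H1)} is comparable on $I$ to $\big(\E[(\xi(s)-\xi(t))^2]\big)^{1/2}$. I would repeatedly use two elementary facts: a $\rho$-ball of radius $\delta$ inside $I$ has Lebesgue measure $\asymp\delta^{Q}$, so $I$ is covered by $\asymp\delta^{-Q}$ of them, and $\nu\times\nu\{(s,t)\in I^2:\rho(s,t)\le\delta\}\asymp\delta^{Q}$, where $\nu$ denotes the normalized Lebesgue measure on $I$.

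For the upper bound, the key step is the single-ball hitting estimate: there is $C>0$ such that
\[
\Pb\big[\,\exists\,t\in I:\ \|W(t)-x\|\le\varepsilon\,\big]\ \le\ C\,\big[f_{n-Q}(\varepsilon)\big]^{-1}
\qquad\text{for all }x\in\R^{n},\ 0<\varepsilon<1 .
\]
To obtain this I would partition $I$ into $\asymp\varepsilon^{-Q}$ sub-cubes of $\rho$-diameter $\asymp\varepsilon$, and on a sub-cube $J$ with centre $t_J$ use the inclusion
\[
\{\exists\,t\in J:\|W(t)-x\|\le\varepsilon\}\ \subseteq\ \{\|W(t_J)-x\|\le 2\varepsilon\}\ \cup\ \big\{\textstyle\sup_{t\in J}\|W(t)-W(t_J)\|>\varepsilon\big\} .
\]
The first event has probability $\le C\varepsilon^{n}$ because the Gaussian density of $W(t_J)$ is bounded (here $\E[\xi(t_J)^2]\ge c_{2,1}$ from \textbf{(H1)}), and the second is controlled by a dyadic chaining inside $J$ together with the Borell--TIS inequality, contributing only a negligible term; summing over the $\asymp\varepsilon^{-Q}$ sub-cubes gives the estimate. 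Given then a covering of $F$ by balls $B(x_i,\varepsilon_i)$ with $\sum_i\varepsilon_i^{\,n-Q}$ close to $\Hc_{n-Q}(F)$, a union bound over $i$ followed by infimization over coverings yields $\Pb[W^{-1}(F)\cap I\neq\emptyset]\le c_2\,\Hc_{n-Q}(F)$.

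For the lower bound, by inner regularity of capacity we may assume $F$ compact. Fix $\mu\in\Pc(F)$ of finite energy $E(\mu):=\int\!\int f_{n-Q}(\|x-y\|)\,\mu(dx)\,\mu(dy)$, write $\Lambda(\varepsilon)$ for the volume of a Euclidean $\varepsilon$-ball in $\R^{n}$, and consider
\[
J_\varepsilon\ :=\ \int_I\int_F \frac{\mathbbm{1}\{\|W(t)-x\|\le\varepsilon\}}{\Lambda(\varepsilon)}\,\mu(dx)\,\nu(dt) .
\]
Since $I$ and $F$ are compact and $W$ is continuous, $\{J_\varepsilon>0\}$ decreases as $\varepsilon\downarrow0$ to $\{W(I)\cap F\neq\emptyset\}$, so $\Pb[W^{-1}(F)\cap I\neq\emptyset]=\lim_{\varepsilon\to0}\Pb[J_\varepsilon>0]$. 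Using \textbf{(H1)} to bound the centered Gaussian density of $W(t)$ above and below on the bounded set $F$ gives $\E[J_\varepsilon]\ge c_0>0$ for $\varepsilon$ small, while for the second moment I would invoke \textbf{(H2)}, namely $\textup{Var}[\xi(t)\mid\xi(s)]\ge c_{2,4}\,\rho(s,t)^2$, to control the two-point joint density of $(W(s),W(t))$ and obtain
\[
\Pb\big[\|W(s)-x\|\le\varepsilon,\ \|W(t)-y\|\le\varepsilon\big]\ \le\ C\,\Lambda(\varepsilon)^2\Big(\rho(s,t)^{-n}\,e^{-c\|x-y\|^2/\rho(s,t)^2}+1\Big) .
\]
Integrating first in $(s,t)$ and decomposing dyadically over $\rho(s,t)\asymp2^{-k}$ turns $\int_I\!\int_I\rho(s,t)^{-n}e^{-c\|x-y\|^2/\rho(s,t)^2}\,\nu(ds)\,\nu(dt)$ into a sum comparable to $\sum_{k\ge0}2^{k(n-Q)}e^{-c\|x-y\|^2 2^{2k}}$, which is comparable to $f_{n-Q}(\|x-y\|)$ in each of the three ranges $n-Q>0$, $n-Q=0$, $n-Q<0$ — this is exactly where the kernel $f_{n-Q}$ arises. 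Hence $\E[J_\varepsilon^2]\le C\,E(\mu)$, and by the Cauchy--Schwarz inequality $\Pb[J_\varepsilon>0]\ge(\E J_\varepsilon)^2/\E[J_\varepsilon^2]\ge c/E(\mu)$ uniformly in small $\varepsilon$; letting $\varepsilon\to0$ and taking the supremum over $\mu\in\Pc(F)$ gives $\Pb[W^{-1}(F)\cap I\neq\emptyset]\ge c_1\,\Cc_{n-Q}(F)$.

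The hardest step will be the second-moment estimate: extracting from the one-point local nondeterminism \textbf{(H2)} a genuinely usable pointwise bound on the two-point joint density, and then organizing the dyadic summation so that the Bessel--Riesz kernel $f_{n-Q}$ is reproduced in all three ranges of $n-Q$. The chaining bound for $\sup_{t\in J}\|W(t)-W(t_J)\|$ in the upper bound and the verification that $\{J_\varepsilon>0\}\downarrow\{W(I)\cap F\neq\emptyset\}$ are secondary and more routine.
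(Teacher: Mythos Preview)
The paper does not contain a proof of this statement: Theorem~\ref{BiLaXicapacity} is quoted verbatim as \cite[Theorem~2.1]{BiLaXi} and used as a black box, with no argument supplied. So there is nothing in the paper to compare your proposal against.

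That said, your outline is the standard strategy of \cite{BiLaXi} itself: a covering/chaining argument for the Hausdorff upper bound and a second-moment (energy) argument for the capacity lower bound, with the anisotropic metric $\rho$ and the local nondeterminism hypothesis \textbf{(H2)} playing exactly the roles you describe. The sketch is coherent and identifies the correct pressure points (the two-point density estimate and the dyadic summation reproducing $f_{n-Q}$). For the purposes of this paper, however, no proof is expected---a citation suffices.
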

 As a consequence, we have the following result.
\begin{Corollary}\label{cor:BiLaXi}
Let $F\subset\R^{n}$ be a Borel set. Then 
\begin{enumerate}
\item If ${\rm dim}_{H}(F)<n-Q$, the set $W^{-1}(F)\cap I$ is empty with probability one.
\item If ${\rm dim}_{H}(F)>n-Q$, the set $W^{-1}(F)\cap I$ is non-empty with positive probability.
\end{enumerate}
\end{Corollary}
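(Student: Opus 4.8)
The plan is to read off both statements from Theorem \ref{BiLaXicapacity} using the classical potential-theoretic dictionary between Hausdorff dimension, Hausdorff measure and Bessel--Riesz capacity; no further probabilistic input is required.

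For part (1), I would first note that ${\rm dim}_{H}(F)<n-Q$ forces $n-Q>0$, since the Hausdorff dimension is always nonnegative. By the definition of ${\rm dim}_{H}(F)$ as $\inf\{q>0 : \mathcal{H}_{q}(F)=0\}$ together with the fact that $\mathcal{H}_{q}(F)=0$ whenever $q>{\rm dim}_{H}(F)$, we obtain $\mathcal{H}_{n-Q}(F)=0$. Substituting this into the upper bound of Theorem \ref{BiLaXicapacity} gives $\Pb[W^{-1}(F)\cap I\neq\emptyset]\leq c_{2}\mathcal{H}_{n-Q}(F)=0$, which is exactly the claim.

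For part (2), the goal is to establish $\Cc_{n-Q}(F)>0$ and then apply the lower bound of Theorem \ref{BiLaXicapacity}, which yields $\Pb[W^{-1}(F)\cap I\neq\emptyset]\geq c_{1}\Cc_{n-Q}(F)>0$. When $n-Q>0$ I would pick $q$ with $n-Q<q<{\rm dim}_{H}(F)$, so that $\mathcal{H}_{q}(F)=\infty>0$; by Frostman's lemma there is then a nonzero finite measure $\mu$ supported on $F$ with $\mu(B(x,\rho))\leq C\rho^{q}$ for all $x$ and $\rho$, and a standard computation shows that such a measure has finite Riesz $(n-Q)$-energy because $n-Q<q$, so after normalization $\Cc_{n-Q}(F)>0$. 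When $n-Q\leq 0$ the kernel $f_{n-Q}$ in \eqref{eq:falphadef} is bounded above by $\log(e/(\cdot\wedge 1))$ (if $n-Q=0$) or is identically $1$ (if $n-Q<0$); since ${\rm dim}_{H}(F)>n-Q$ forces ${\rm dim}_{H}(F)>0$, any Frostman measure of a small positive order has finite $f_{n-Q}$-energy, and again $\Cc_{n-Q}(F)>0$.

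I do not anticipate a genuine difficulty: the corollary is essentially a translation of Theorem \ref{BiLaXicapacity} from the language of measure and capacity into the language of Hausdorff dimension. The only points needing a word of care are the elementary observation that $n-Q$ is positive under the hypothesis of part (1), and the comparison ``$\mathcal{H}_{q}(F)>0$ for some $q>\alpha\ \Rightarrow\ \Cc_{\alpha}(F)>0$'', which is classical and can be quoted from \cite{Falc,Kah} rather than reproved.
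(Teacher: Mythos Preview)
Your proposal is correct and is precisely the standard potential-theoretic argument the paper has in mind: the paper does not give a proof of Corollary~\ref{cor:BiLaXi} at all, presenting it simply with the words ``As a consequence, we have the following result'' after Theorem~\ref{BiLaXicapacity}. Your write-up fills in exactly the implicit reasoning (Hausdorff measure vanishes above the dimension for part~(1), Frostman's lemma gives positive capacity below the dimension for part~(2)), so there is nothing to compare.
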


\section{Geometric properties of degenerate Hermitian matrices}\label{sec:geo}
 
Let $\Sc(d)$ and $\Hc(d)$ denote the set of real symmetric matrices and complex Hermitian matrices, respectively.  Define 
\begin{align*}
n_{\beta}(d)
  &:=\left\{\begin{array}{cc}d(d+1)/2  &\text{ if }\ \beta=1\\d^2  &\text{ if }\ \beta=2.\end{array}\right.
\end{align*}
In the sequel, we will identify a given element $x\in\R^{n_1(d)}$ with the unique $\hat{x}=\{\hat{x}_{i,j}\}_{1\leq i,j\leq d}\in\Sc(d)$ satisfying $\hat{x}_{i,j}=x_{\frac{1}{2}i(1+2d-i)-d+j}$, for $1\leq i\leq j\leq d$. In a similar way, we can identify an element $x\in\R^{n_{2}(d)}$ with the unique $\hat{x}\in\Hc(d)$ given by 
 \begin{align*}
\hat{x}_{i,j}
  &=\left\{\begin{array}{cc}x_{\frac{1}{2}i(1+2 d-i)-d} &\text{ if } i=j\\
	x_{\frac{1}{2}i(1+2 d-i)-d+j}+\textbf{i}x_{n_{1}(d)+\frac{1}{2}i(2 d-i-1)-d+j}  &\text{ if } i<j.\end{array}\right.
\end{align*}
We will denote by $\Phi_{i}(x)$ the $i$-th largest eigenvalue of $\hat{x}$. Notice that since $(\Phi_{1}(x),\dots \Phi_{d}(x))$ are the ordered roots of the characteristic polynomial of $\hat{x}$, it follows that $\Phi_{i}(x)$ is continuous over $x$ for every $1\leq i\leq d$. Define the sets $\mathcal{H}_{deg}^d$ and $\mathcal{S}_{deg}^d$ by
\begin{align}
\mathcal{H}_{deg}^d
  &:=\{x\in\R^{n_2(d)}\ |\ \Phi_{i}(x)=\Phi_{j}(x),\ \ \text{for some}\ 1\leq i<j\leq d\}\label{eq:Hdeg},\\
\mathcal{S}_{deg}^d
  &:=\{x\in\R^{n_1(d)}\ |\ \Phi_{i}(x)=\Phi_{j}(x),\ \ \text{for some}\ 1\leq i<j\leq d\}\label{eq:Sdeg}.
\end{align}

After identifying the random matrix $Y^{\beta}(t)$  defined in (\ref{ybeta}) as a random vector with values  in $\R^{n_\beta(d)}$, we have that  
$$\{\lambda_{i}^1(t)=\lambda_{j}^1(t)\ \ \text{for some }\ t\in I\ \text{and}\ 1\leq i<j\leq n\}=\{Y^1(t)\in \mathcal{S}_{deg}^d\ \ \text{for some }\ t\in I\},$$
and 
$$\{\lambda_{i}^2(t)=\lambda_{j}^2(t)\ \ \text{for some }\ t\in I\ \text{and}\ 1\leq i<j\leq n\}=\{Y^2(t)\in \mathcal{H}_{deg}^d\ \ \text{for some }\ t\in I\}.$$
Thus, in order to prove Theorem \ref{thm:main}, it suffices to study the hitting probability of $Y^1(t)$ to $\mathcal{S}_{deg}^d$ and $Y^2(t)$ to $\mathcal{H}_{deg}^d$. To this end, we will give a description of some geometric properties of $\mathcal{S}_{deg}^d$ and $\mathcal{H}_{deg}^d$ and determine their Hausdorff dimension. The main results of this section are Propositions \ref{lem:geometricaux1}, \ref{propaux1a},  \ref{lem:geometricaux2} and \ref{lem:geometricaux2GUE} which, roughly speaking, state that there exist measurable sets $\Sc_{in}^{d},\Sc_{out}^{d}\subset\R^{n_1(d)}$ and $\Hc_{in}^{d},\Hc_{out}^{d}\subset\R^{n_2(d)}$, satisfying
\begin{align*}
\Sc_{in}^{d}\subset \Sc_{deg}^d\subset \Sc_{out}^{d}\ \ \ \ \ \ \ \ \ \ \ \ \text{and}\ \ \ \ \ \ \ \ \ \ \ \ 
\Hc_{in}^{d}\subset \Hc_{deg}^d\subset \Hc_{out}^{d},
\end{align*}
as well as the following properties:
\begin{enumerate}
\item $\Sc_{in}^{d}$ and $\Hc_{in}^{d}$ are manifolds of dimensions $n_1(d)-2$ and $n_2(d)-3$, respectively.
\item $\Sc_{out}^{d}$ is  the image of a smooth function defined in an open subset of $\R^{n_1(d)-2}$ with values in $\R^{n_1(d)}$ and $\Hc_{in}^{d}$ is the image of  a smooth function defined in an open subset of $\R^{n_2(d)-3}$ with values in $\R^{n_2(d)}$.
\item  For all $x\in  \Sc_{deg}^d$ satisfying $\textbf{Sp}(x)=d-1$, there exists $\varepsilon>0$ such that $\Sc_{in}^{d}\cap B_{\varepsilon}(x) = \Sc_{deg}^d\cap B_{\varepsilon}(x)$. Similarly,  for all $x\in  \Hc_{deg}^d$ satisfying $\textbf{Sp}(x)=d-1$, there exists $\varepsilon>0$ such that $\Hc_{in}^{d}\cap B_{\varepsilon}(x) = \Hc_{deg}^d\cap B_{\varepsilon}(x)$.

\end{enumerate}

\noindent It is worth mentioning that the sets $\Sc_{deg}^{d}$ and $\Hc_{deg}^d$ can be described as the zeros of a multivariate polynomial (see Appendix A.4 in \cite{AnGuZe} for a proof of this fact) and their dimension is well known in random matrix theory (see for instance \cite{Keller}). However, for the purposes of our application, we are not only interested in the description of $\Sc_{deg}^{d}$ and $\Hc_{deg}^d$ as the set of zeros of a given polynomial, since these type of geometric objects may have corners and self-intersections. Instead, we want to describe a large class of points around which $\Sc_{deg}^{d}$ and $\Hc_{deg}^d$ are differentiable manifolds, as this property is vital for the proof of \eqref{eq:fbmprobone} in Corollary \ref{cor:main}. For this reason, in the rest of the section we provide a self-contained description of the differentiability properties of $\mathcal{S}_{deg}^d$ and $\mathcal{H}_{deg}^d$.

We will require the following terminology from differential geometry. In the sequel, for every $n\in\N$, $x\in\R^{n}$ and $\delta>0$, we will denote by $B_{\delta}(x)$ the open ball of radius $\delta$ and center  $x$. In addition, we will say that an $\R^{n}$-valued function, defined over an open subset of $\R^{m}$ with $m\in\N$, is smooth, if it is infinitely differentiable. 

\begin{Definition}\label{eq:Fdiff}
Let $m,n\in\N$ be such that $m\leq n$. A set $M\subset\R^{n}$ is a smooth submanifold of $\R^{n}$, with dimension $m$, if for every $x_{0}\in M$, there exists $\varepsilon>0$, an open neighborhood of zero $U\subset\R^{m}$ and a smooth mapping 
$$F:U\rightarrow M,$$
satisfying $F(0)=x_{0}$, as well as the following properties:
\begin{enumerate}
\item[-] $F$ is a homeomorphism from $U$ to $M\cap B_{\varepsilon}(x_{0})$. 
\item[-] For every $p\in U$, the derivative of $F$ at $p$, denoted by $DF_p$,  is an injective mapping.
\end{enumerate}
If such mapping $F$ exists, we call it a local chart for $M$ covering $x_{0}$.
\end{Definition}

 If $M$ is a smooth submanifold of $\R^{n}$, we define its tangent plane at a given point $x\in M$, denoted by $TM_{x}$, as the set of vectors of the form  $\alpha^{\prime}(0)$, where $\alpha:(-1,1)\rightarrow M$ is a smooth curve satisfying $\alpha(0)=x$.

 Let $M$ and $N$ be smooth manifolds. We say that $f:M\rightarrow N$ is smooth if for every $x\in M$ and all charts $F$ and $G$, covering $x$ and $f(x)$ respectively, the function $ G^{-1}\circ f\circ F$ is smooth. In this case, we can define the derivative of $f$ at a given point $x\in M$, as the function $Df_{x}:TM_{x}\rightarrow TN_{f(x)}$, that maps every vector $v\in TM_{x}$ of the form $v=\alpha^{\prime}(0)$, to the vector $Df_{x}(v):=\frac{d}{dt}f(\alpha(t))|_{t=0}$.

 Let $f:M\rightarrow N$ be a smooth mapping between manifolds $M,N\subset\R^{n}$. We say that a point $y\in N$ is a regular value for $f$, if for all $x\in f^{-1}\{y\}$, the derivative $Df_{x}:TM_{x}\rightarrow TN_{y}$ is surjective. The following result allows us to identify the level curves of a smooth function, as smooth manifolds. Its proof can be found, for instance, in \cite[Theorem~9.9]{Tu}.

\begin{Theorem}[Preimage theorem]\label{Thm:preimage}
Consider a smooth mapping $f:M\rightarrow N$, where $M$ and $N$ are smooth submanifolds of $\R^{n}$ of dimensions $m_{M}$ and $m_{N}$ respectively, with $m_{N}\leq m_{M}\leq n$. If $y\in N$ is a regular value for $f$, then $f^{-1}\{y\}$ is a smooth submanifold of $\R^{n}$ of dimension $m_{M}-m_{N}$.
\end{Theorem}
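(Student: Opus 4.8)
The plan is to verify the conclusion pointwise: for each $x_0\in f^{-1}\{y\}$ we construct a local chart for $f^{-1}\{y\}$ covering $x_0$, in the sense of Definition~\ref{eq:Fdiff}, with domain an open subset of $\R^{m_M-m_N}$. The strategy is the classical implicit-function-theorem argument, adapted to the parametrization-chart notion of submanifold used here. First we pass to local coordinates: choose a local chart $F:U\to M$ covering $x_0$, with $U\subset\R^{m_M}$ an open neighborhood of $0$ and $F(0)=x_0$, and a local chart $G:V\to N$ covering $y$, with $V\subset\R^{m_N}$ and $G(0)=y$. Since smooth maps between manifolds are continuous, after shrinking $U$ we may assume $f(F(U))\subset G(V)$, so that $h:=G^{-1}\circ f\circ F:U\to\R^{m_N}$ is a well-defined smooth map with $h(0)=0$. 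Because $DF_0$ and $DG_0$ are linear isomorphisms onto $TM_{x_0}$ and $TN_y$, the hypothesis that $y$ is a regular value translates into surjectivity of $Dh_0:\R^{m_M}\to\R^{m_N}$.

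Next we straighten $h$. After a linear change of coordinates on $\R^{m_M}$ we may assume that the first $m_N$ partial derivatives of $h$ at $0$ are linearly independent; writing $\R^{m_M}=\R^{m_N}\times\R^{m_M-m_N}$ with coordinates $(z,w)$, the map $\Psi(z,w):=(h(z,w),w)$ then has invertible differential at $0$. By the Inverse Function Theorem, $\Psi$ restricts to a diffeomorphism from an open neighborhood $U_0\ni 0$ onto an open set; shrinking if necessary, its inverse $\psi$ is defined on a product $V_0\times W_0$ around $0$ in $\R^{m_N}\times\R^{m_M-m_N}$, satisfies $h(\psi(z,w))=z$ there, and has $\psi(0,0)=0$. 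Consequently $h^{-1}\{0\}\cap U_0=\psi(\{0\}\times W_0)$, an embedded copy of $W_0$.

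We then take $\widetilde F:W_0\to\R^n$ defined by $\widetilde F(w):=F(\psi(0,w))$ as the candidate chart. It is smooth, $\widetilde F(0)=x_0$, and $D\widetilde F_w$ is the composition of the injection $w\mapsto(0,w)$, the isomorphism $D\psi$, and the injective map $DF$, hence injective. It remains to choose $\varepsilon>0$ so that $\widetilde F$ becomes a homeomorphism from $W_0$ (possibly further shrunk) onto $f^{-1}\{y\}\cap B_\varepsilon(x_0)$. Since $F$ is a homeomorphism of $U$ onto the relatively open set $M\cap B_{\varepsilon_0}(x_0)$, we may pick $\varepsilon\in(0,\varepsilon_0)$ small enough that $M\cap B_\varepsilon(x_0)\subset F(U_0)$; then, using $f\circ F=G\circ h$ on $U$ together with injectivity of $G$, one checks that $F^{-1}(f^{-1}\{y\}\cap B_\varepsilon(x_0))=h^{-1}\{0\}\cap F^{-1}(B_\varepsilon(x_0))=\psi(\{0\}\times W_0')$ for some open $W_0'\ni 0$ contained in $W_0$. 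Replacing $W_0$ by $W_0'$ yields $f^{-1}\{y\}\cap B_\varepsilon(x_0)=\widetilde F(W_0)$, and continuity of $\widetilde F$ and of its inverse follows from continuity of $F$, $F^{-1}$, $\psi$, $\psi^{-1}$ and of $w\mapsto(0,w)$ and its left inverse. Thus $\widetilde F$ is a local chart for $f^{-1}\{y\}$ covering $x_0$ with domain an open subset of $\R^{m_M-m_N}$, and since $x_0\in f^{-1}\{y\}$ was arbitrary, $f^{-1}\{y\}$ is a smooth submanifold of $\R^n$ of dimension $m_M-m_N$.

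The steps of passing to coordinates and straightening $h$ are the classical submersion argument and are essentially routine. The main obstacle will be the last step: the bookkeeping needed to reconcile the successive shrinkings of $U$, $U_0$ and $W_0$ with an ambient ball $B_\varepsilon(x_0)\subset\R^n$, so as to guarantee that $\widetilde F$ is a genuine embedding whose image is \emph{exactly} $f^{-1}\{y\}$ near $x_0$, rather than merely an injective immersion into it. This is precisely the point where the homeomorphism clause in Definition~\ref{eq:Fdiff} is indispensable, since it ensures that $F$ is an open map for the subspace topology on $M$ and hence that neighborhoods inside $M$ can be compared with ambient balls in $\R^n$.
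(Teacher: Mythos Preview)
Your proof is correct and follows the classical submersion/implicit-function-theorem argument. The paper, however, does not supply its own proof of this theorem: it simply states the result and refers the reader to \cite[Theorem~9.9]{Tu}. Your argument is essentially the standard proof one finds in such a textbook reference, carefully adapted to the parametrization-based Definition~\ref{eq:Fdiff} of submanifold used in the paper; in particular, your attention to the homeomorphism clause and the bookkeeping of shrinking neighborhoods so that $\widetilde F(W_0)=f^{-1}\{y\}\cap B_\varepsilon(x_0)$ exactly is appropriate and correctly handled.
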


Along the paper we will denote by $\|\cdot \|$  the Euclidean norm  on $\R^N$ and by $\langle \cdot, \cdot \rangle$ the corresponding inner product.
We will use the same notation for the norm and inner product in  $\mathbb{C}^N$.

 For $d,h\in\N$, let $\R^{d\times h}$ denote the set of real matrices of dimensions $d\times h$ and let $I_{d}$ be the identity element of $\R^{d\times d}$. For every integer $0\leq i\leq d$, we define the sets 
\begin{align}\label{eq:Otildedef}
\mathcal{O}(d;i):=\{A\in\R^{d\times (d-i)}: A^*A=I_{d-i}\},
\end{align}
where $A^*$ is the transpose of $A$.
In the case where $i=0$, the set $\mathcal{O}(d;i)$ is the orthogonal group of dimension $d$, which will be denoted simply by $\Oc(d):=\mathcal{O}(d;0)$. Using the preimage theorem, we can show that $\Oc(d;i)$ is a submanifold of $\R^{d\times (d-i)}\cong\R^{d(d-i)}$, of dimension $\frac{d(d-1)-i(i-1)}{2}$. This result can be proved in the following manner: consider the mapping $f:\R^{d\times (d-i)}\rightarrow\Sc(d-i),$ defined by 
$$f(X):=X^*X-I_{d-i}.$$ 
Then, for every $A\in f^{-1}\{0\}$, the derivative of $f$ at $A$, denoted by $Df_{A}$, satisfies
\begin{align}\label{eq:DfAB}
Df_{A}B
 &=A^*B+B^*A,\ \ \ \ \ \ \ \text{for every}\ B\in\R^{d\times (d-i)}.
\end{align} 
In particular, for every $C\in\Sc(d-i)$, the matrix $B:=\frac{1}{2}AC$ satisfies $Df_{A}B=C$, so that $Df_{A}$ is surjective for every $A\in f^{-1}\{0\}$. Consequently, zero is a regular value for $f$, and by the preimage theorem, $\Oc(d;i)=f^{-1}\{0\}$ is a smooth submanifold of $\R^{d\times (d-i)}$ of dimension ${\rm dim}(\R^{d(d-i)})-{\rm dim}(\Sc(d-i))=\frac{d(d-1)-i(i-1)}{2}$.

 Similarly, for $d,h\in\N$ we denote by $\C^{d\times h}$ the set of complex matrices of dimensions $d\times h$, and define 
\begin{align}\label{eq:Utildedef}
\mathcal{U}(d;i):=\{A\in\C^{d\times (d-i)} : A^*A=I_{d-i}\},
\end{align}
where $A^*$ denotes the conjugate of the transpose of $A$.
Proceeding as before, we can show that $\mathcal{U}(d;i)$ is a  smooth submanifold of $\C^{d\times(d-i)}\cong\R^{2d(d-i)}$, of dimension $d^2-i^2$. In particular, the unitary group $\mathcal{U}(d):=\mathcal{U}(d;0)$ has dimension $d^2.$

 In the sequel, for every $A\in\C^{d\times h}$, we will denote by $A_{*,j}$ the $j$-th column of $A$, where $1\leq j\leq h$. Next we will show  the following technical result.
 
 \begin{lemma}
For every $R\in\mathcal{U}(d;2)$, there exists $\gamma>0$, such that the set
\begin{align}\label{eq:Vtilde}
\mathcal{V}_{\gamma}^{R}:=\{A\in \mathcal{U}(d;2)\cap B_{\gamma}(R) : \Ip{A_{*,j},R_{*,j}}=|\Ip{A_{*,j},R_{*,j}}|\ \ \ \text{ for }\ 1\leq j\leq d-2\},
\end{align}
is a $(d^2-d-2)$-dimensional submanifold of $\mathcal{U}(d;2)\cap B_{\gamma}(R)$. 
\end{lemma}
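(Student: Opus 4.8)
The plan is to realize $\mathcal{V}_\gamma^R$ as the preimage of a regular value under a smooth map, so that Theorem~\ref{Thm:preimage} applies with the manifold $\mathcal{U}(d;2)\cap B_\gamma(R)$ in place of the ambient $\R^n$. First I would fix $R\in\mathcal{U}(d;2)$ and observe that, since $A\mapsto \Ip{A_{*,j},R_{*,j}}$ is continuous and equals $\Ip{R_{*,j},R_{*,j}}=1\neq 0$ at $A=R$, there is $\gamma>0$ such that $\Ip{A_{*,j},R_{*,j}}\neq 0$ for all $A\in \mathcal{U}(d;2)\cap B_\gamma(R)$ and all $1\leq j\leq d-2$. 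On this neighborhood the condition $\Ip{A_{*,j},R_{*,j}}=\Abs{\Ip{A_{*,j},R_{*,j}}}$ is equivalent to $\mathrm{Im}\,\Ip{A_{*,j},R_{*,j}}=0$ \emph{together with} $\mathrm{Re}\,\Ip{A_{*,j},R_{*,j}}>0$; the latter is an open condition (automatically satisfied near $R$ after shrinking $\gamma$), so only the $d-2$ real equations $\mathrm{Im}\,\Ip{A_{*,j},R_{*,j}}=0$ cut out $\mathcal{V}_\gamma^R$. Thus I would define $g:\mathcal{U}(d;2)\cap B_\gamma(R)\to\R^{d-2}$ by $g(A):=(\mathrm{Im}\,\Ip{A_{*,1},R_{*,1}},\dots,\mathrm{Im}\,\Ip{A_{*,d-2},R_{*,d-2}})$, which is smooth as the restriction of a smooth (indeed $\R$-linear) map on $\C^{d\times(d-2)}\cong\R^{2d(d-2)}$, and show that $0$ is a regular value of $g$. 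Then $\mathcal{V}_\gamma^R=g^{-1}\{0\}$ is a submanifold of dimension $\dim\mathcal{U}(d;2)-(d-2)=(d^2-4)-(d-2)=d^2-d-2$, as claimed.

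The substantive step is the surjectivity of $Dg_A:T\mathcal{U}(d;2)_A\to\R^{d-2}$ for every $A\in g^{-1}\{0\}$. Recall from the earlier computation that the tangent space to $\mathcal{U}(d;2)$ at $A$ is $T\mathcal{U}(d;2)_A=\{B\in\C^{d\times(d-2)}: A^*B+B^*A=0\}$, i.e.\ $A^*B$ is skew-Hermitian. For a curve $A(t)$ in $\mathcal{U}(d;2)$ with $A(0)=A$, $A'(0)=B$, we get $(Dg_A B)_j=\mathrm{Im}\,\Ip{B_{*,j},R_{*,j}}$. To hit an arbitrary $(c_1,\dots,c_{d-2})\in\R^{d-2}$ I would produce a suitable $B$ in the tangent space: the natural candidate is $B_{*,j}=\ib\,c_j\,A_{*,j}$ (scaling the $j$-th column by an imaginary number), i.e.\ $B=A\,D$ with $D=\ib\,\mathrm{diag}(c_1,\dots,c_{d-2})$. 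Then $A^*B=A^*AD=D$ is skew-Hermitian (purely imaginary diagonal), so $B\in T\mathcal{U}(d;2)_A$; and $\Ip{B_{*,j},R_{*,j}}=\ib c_j\Ip{A_{*,j},R_{*,j}}$. Since $A\in g^{-1}\{0\}$, $\Ip{A_{*,j},R_{*,j}}$ is a \emph{positive real number} $\rho_j$, so $\Ip{B_{*,j},R_{*,j}}=\ib c_j\rho_j$ and hence $(Dg_A B)_j=c_j\rho_j$. As $\rho_j>0$, rescaling $c_j\mapsto c_j/\rho_j$ shows $Dg_A$ is onto $\R^{d-2}$.

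Finally I would assemble the pieces: $\mathcal{U}(d;2)\cap B_\gamma(R)$ is an open submanifold of $\mathcal{U}(d;2)$, hence itself a smooth submanifold of $\C^{d\times(d-2)}$ of dimension $d^2-4$; $g$ is smooth with regular value $0$; by the preimage theorem (Theorem~\ref{Thm:preimage}) $g^{-1}\{0\}$ is a smooth submanifold of dimension $(d^2-4)-(d-2)=d^2-d-2$; and after shrinking $\gamma$ once more so that $\mathrm{Re}\,\Ip{A_{*,j},R_{*,j}}>0$ holds throughout, $g^{-1}\{0\}$ coincides with $\mathcal{V}_\gamma^R$, which completes the proof. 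The only real obstacle is verifying the regular-value condition, and the computation above resolves it cleanly; the rest is bookkeeping about restricting manifolds and shrinking the radius, together with the elementary remark that $z=\Abs{z}$ iff $\mathrm{Im}\,z=0$ and $\mathrm{Re}\,z\geq 0$.
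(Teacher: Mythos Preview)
Your proof is correct and follows the same strategy as the paper's: apply the preimage theorem after exhibiting the tangent direction $B_{*,j}=\ib\, c_j A_{*,j}$ (equivalently, the curve $t\mapsto e^{\ib c_j t}A_{*,j}$) to verify surjectivity of the derivative. The only difference is cosmetic---you map to $\R^{d-2}$ via $A\mapsto(\mathrm{Im}\,\Ip{A_{*,j},R_{*,j}})_{j}$, whereas the paper maps to the torus $\mathbb{T}^{d-2}$ via $A\mapsto(\Ip{A_{*,j},R_{*,j}}/|\Ip{A_{*,j},R_{*,j}}|)_{j}$---and your version is in fact slightly more careful, since you verify the regular-value condition at every $A\in g^{-1}\{0\}$ rather than only at $A=R$.
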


\begin{proof} Consider the manifold 
$$\mathbb{T}^{d-2}:=\{(e^{\textbf{i}\theta_1},\dots, e^{\textbf{i}\theta_{d-2}})\in\C^{d-2} :  \theta_i\in[-\pi/2,\pi/2)\}.$$ 
We will prove that if $\gamma>0$ is sufficiently small, the point $\vec{1}:=(1,\dots, 1)$ is a regular value for the smooth function $f:\mathcal{U}(d;2)\cap B_{\gamma}(R)\rightarrow\mathbb{T}^{d-2}$, defined by 
\begin{align}\label{f:def}
f(A):=(|\Ip{A_{*,1},R_{*,1}}|^{-1}\Ip{A_{*,1},R_{*,1}},\dots, |\Ip{A_{*,d-2},R_{*,d-2}}|^{-1}\Ip{A_{*,d-2},R_{*,d-2}}).
\end{align}
Notice that $\mathcal{U}(d;2)$ is a $(d^2-4)$-dimensional manifold. This implies, by Theorem \ref{Thm:preimage},  that the set $\mathcal{V}_{\gamma}^{R}=f^{-1}\{\vec{1}\}$ is a $(d^2-d-2)$-dimensional manifold. To check that $\vec{1}$ is a regular value for $f$, notice that the tangent plane to $\mathbb{T}^{d-2}$ at $\vec{1}$, consists of the the set of vectors $\eta\in\C^{d-2}$ of the form $\eta=(\textbf{i}\eta_{1},\dots, \textbf{i}\eta_{d-2})$, for $\eta_{i}\in\R$. For such $\eta$, there exists $\delta>0$, such that the mapping $A:(-\delta,\delta)\rightarrow \mathcal{V}_{\gamma}^R$, given by  
$$A_{i,j}(t)=e^{\textbf{i}\eta_{j}t}R_{i,j},$$ 
is a curve inside of $\mathcal{U}(d;2)\cap B_{\gamma}(R)$, satisfying 
$Df_{R}(\frac{d}{dt}f(A(t))\big|_{t=0})=\eta$. This proves that $\vec{1}$ is indeed a regular value of $f$.
\end{proof}

The next lemma is a refinement of the well-known continuity property for the eigenprojections of real symmetric matrices. In the sequel, $\mathcal{D}(d)$ will denote the set of diagonal real matrices of dimension $d$. In addition, for every $A\in\C^{d\times d}$, the set $\textbf{Sp}(A)$ will denote the spectrum of $A$ and for $\lambda\in \textbf{Sp}(A)$, $\textbf{E}_{\lambda}^A$ will denote the eigenspace associated to $\lambda$. For every $w^{1},\dots w^{h}\in\C^{d}$, with $h\in\N$, we will denote by $[w^1,\dots, w^{h}]$ the element of $\C^{d\times h}$, whose $j$-th column is equal to $w^{j}$ for all $1\leq j\leq h$.

\begin{Lemma}\label{lem:eigenprojections}
Let $A$ be a $d\times d$ real symmetric matrix, with $|\textbf{Sp}(A)|=d-1$, such that 
\begin{align*}
A
  &=PDP^*,
\end{align*}
for some $P\in\Oc(d)$ and $D\in\mathcal{D}(d)$. Then, for every $\varepsilon>0$ there exists $\delta>0$, such that for all $B\in\Sc_{deg}^d$ satisfying 
\begin{align}\label{eq:ABnear}
\max_{1\leq i,j\leq d}\Abs{A_{i,j}-B_{i,j}}<\delta,
\end{align}
there exists a spectral decomposition of the form $B=Q\Delta Q^*$, where $Q\in\Oc(d)$ and $\Delta\in\mathcal{D}(d)$ satisfy
\begin{align}
\max_{1\leq i,j\leq d}\Abs{Q_{i,j}-P_{i,j}}<\varepsilon\label{eq:QPDdelta1}
\end{align}
and
\begin{align}
\max_{1\leq i\leq d}\Abs{D_{i,i}-\Delta_{i,i}}<\varepsilon\label{eq:QPDdelta2}.
\end{align}
\end{Lemma}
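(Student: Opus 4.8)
\textbf{Proof strategy for Lemma \ref{lem:eigenprojections}.}

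The plan is to argue by contradiction using a compactness argument, exploiting the continuity of eigenvalues together with the rigidity forced by the hypothesis $|\textbf{Sp}(A)|=d-1$. Suppose the conclusion fails for some $\varepsilon>0$. Then there is a sequence $B^{(k)}\in\Sc_{deg}^d$ with $\max_{i,j}|A_{i,j}-B^{(k)}_{i,j}|\to 0$, such that no spectral decomposition $B^{(k)}=Q\Delta Q^*$ with $Q\in\Oc(d)$, $\Delta\in\mathcal{D}(d)$ satisfies both \eqref{eq:QPDdelta1} and \eqref{eq:QPDdelta2}. For each $k$ pick \emph{some} spectral decomposition $B^{(k)}=Q^{(k)}\Delta^{(k)}(Q^{(k)})^*$ with the eigenvalues on the diagonal in decreasing order. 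Since $\Oc(d)$ is compact, after passing to a subsequence we may assume $Q^{(k)}\to Q^{\infty}\in\Oc(d)$; and since $B^{(k)}\to A$, continuity of the ordered eigenvalues gives $\Delta^{(k)}\to\Delta^{\infty}$ where $\Delta^{\infty}$ is the diagonal matrix of ordered eigenvalues of $A$. Passing to the limit in $B^{(k)}=Q^{(k)}\Delta^{(k)}(Q^{(k)})^*$ yields $A=Q^{\infty}\Delta^{\infty}(Q^{\infty})^*$, so the columns of $Q^{\infty}$ form an orthonormal eigenbasis of $A$ compatible with the chosen ordering.

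The remaining point is to upgrade this to the \emph{specified} decomposition $A=PDP^*$. Here is where $|\textbf{Sp}(A)|=d-1$ enters: exactly one eigenvalue of $A$ has multiplicity $2$ and the rest are simple. Write $\mu$ for the double eigenvalue, with two-dimensional eigenspace $\textbf{E}_\mu^A$, and let the simple eigenvalues be $\lambda_1>\cdots$ with one-dimensional eigenspaces. The ordering we fixed places $\mu$ in two consecutive diagonal slots, say positions $\ell,\ell+1$; since $B^{(k)}\in\Sc_{deg}^d$ we may—and should—arrange that $\Delta^{(k)}_{\ell,\ell}=\Delta^{(k)}_{\ell+1,\ell+1}$ (choose the repeated eigenvalue of $B^{(k)}$ for these two slots), so this is consistent in the limit. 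For the simple eigenvalues, the corresponding columns of $P$ and of $Q^{\infty}$ are each unit eigenvectors of a one-dimensional eigenspace, hence differ only by a sign $\pm 1$; absorbing these signs into $Q^{(k)}$ (which does not change $B^{(k)}=Q^{(k)}\Delta^{(k)}(Q^{(k)})^*$) we may assume these columns of $Q^{\infty}$ equal those of $P$. For the two columns spanning $\textbf{E}_\mu^A$: the columns $P_{*,\ell},P_{*,\ell+1}$ and $Q^{\infty}_{*,\ell},Q^{\infty}_{*,\ell+1}$ are two orthonormal bases of the same $2$-dimensional space, so $[Q^{\infty}_{*,\ell},Q^{\infty}_{*,\ell+1}]=[P_{*,\ell},P_{*,\ell+1}]\,O$ for some $O\in\Oc(2)$; replacing $Q^{(k)}$ by $Q^{(k)}O^{*}$ on these two columns (again leaving $B^{(k)}$ unchanged, since $\Delta^{(k)}$ is a scalar $\mu^{(k)}$ on these slots and $O^{*}$ commutes with a scalar block) makes $Q^{\infty}_{*,\ell}=P_{*,\ell}$, $Q^{\infty}_{*,\ell+1}=P_{*,\ell+1}$. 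After all these modifications $Q^{(k)}\to P$ and $\Delta^{(k)}\to D$, so for $k$ large both \eqref{eq:QPDdelta1} and \eqref{eq:QPDdelta2} hold—contradicting the choice of the sequence.

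The main obstacle is the bookkeeping in the last paragraph: one must be careful that every normalization of $Q^{(k)}$ (sign flips on simple eigenvectors, multiplication by an $O\in\Oc(2)$ on the degenerate block) genuinely preserves the identity $B^{(k)}=Q^{(k)}\Delta^{(k)}(Q^{(k)})^*$, which works precisely because $\Delta^{(k)}$ is constant on the index block where we act by $\Oc(2)$ and the sign flips are diagonal. A clean way to package this is to note that the set of orthogonal matrices $Q$ diagonalizing $A$ with the ordering fixed is exactly $\{P\cdot(\varepsilon_1,\dots,O,\dots,\varepsilon_d) : \varepsilon_i\in\{\pm1\},\ O\in\Oc(2)\text{ in the }\mu\text{-block}\}$, and this stabilizer group is compact; so one can choose the decomposition of $B^{(k)}$ to lie (for $k$ large) in any prescribed neighborhood of $P$ within this orbit structure. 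One must also confirm that the choice of spectral decomposition of $B^{(k)}$ is free on the $\mu^{(k)}$-eigenspace of $B^{(k)}$ (two-dimensional, since $B^{(k)}\in\Sc_{deg}^d$ near $A$ forces its repeated eigenvalue to be close to $\mu$ with a $2$-dimensional eigenspace), which is exactly where the degeneracy of $B^{(k)}$ is used.
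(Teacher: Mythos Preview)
Your argument is correct and takes a genuinely different route from the paper. The paper proceeds constructively: it writes the eigenprojections of $B$ as Cauchy contour integrals $\kappa_A^i(B)=\frac{1}{2\pi\textbf{i}}\int_{\Cc_A^i}(\xi I_d-B)^{-1}d\xi$, which are manifestly continuous in $B$, applies them to the columns of $P$, and normalizes (with a Gram--Schmidt step on the degenerate pair) to produce the columns of $Q$ explicitly as continuous functions of $B$. You instead argue by contradiction and compactness of $\Oc(d)$, extract a subsequential limit $Q^{\infty}$ diagonalizing $A$, and then exploit the stabilizer of the spectral decomposition of $A$ (sign flips on the simple eigenlines, an $\Oc(2)$-rotation on the two-dimensional block) to rotate the $Q^{(k)}$ into decompositions converging to $P$. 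Your approach is more elementary---no holomorphic functional calculus---and the stabilizer viewpoint packages the role of the hypothesis $|\textbf{Sp}(A)|=d-1$ cleanly; the paper's approach, on the other hand, produces an explicit continuous assignment $B\mapsto Q$, which is closer in spirit to the local charts built later in Propositions~\ref{lem:geometricaux2} and~\ref{lem:geometricaux2GUE}. One small point to make explicit: the given $D$ need not have its diagonal in decreasing order, so your identification of ``column $j$ of $P$'' with ``column $j$ of $Q^{\infty}$'' tacitly assumes $D=\Delta^{\infty}$; either declare without loss of generality (as the paper does) that the entries of $D$ are ordered, or absorb a permutation matrix into your stabilizer group before matching columns.
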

\begin{proof}
The existence of a matrix $\Delta$ satisfying \eqref{eq:QPDdelta2} follows from the continuity of $\Phi$, so it suffices to prove \eqref{eq:QPDdelta1}. The idea for proving this relation is the following: first we express the eigenprojections of the degenerate symmetric matrices lying within a small neighborhood $U$ around $A$, as matrix-valued Cauchy integrals. This representation allows us to prove that the mapping that sends an element $B\in U$, to the eigenprojection of $B$ over its $i$-th largest eigenvalue, is continuous with respect to the entries of $B$. Finally, we will choose a set of eigenvectors for $B$ by applying the (continuous) eigenprojections of $B$ to the eigenvectors of $A$. The matrix $Q$, with columns given by the renormalization of such eigenvectors will then satisfy \eqref{eq:QPDdelta1}.

The detailed proof is as follows. Define $\lambda_{i}:=D_{i,i}$ for $1\leq i\leq d$, and assume without loss of generality that $\lambda_{1}\leq \cdots\leq \lambda_{d-1}=\lambda_{d}$. Using the fact that $|\textbf{Sp}(A)|=d-1$, we get
\begin{align}\label{eq:Dordered}
\lambda_{1}<\lambda_{2}<\cdots<\lambda_{d-2}<\lambda_{d-1}=\lambda_{d}.
\end{align}
  For $i=1,\dots, d$, let $\Cc_{A}^i\subset\C\backslash \textbf{Sp}(A)$ be any smooth closed curve around $\lambda_{i}$ and denote by $\mathcal{I}_{A}^i$ the closure of the interior of $\Cc_{A}^i$. Assume that $\mathcal{C}_{A}^{d-1}=\mathcal{C}_{A}^d$ and that
the diameter of $\mathcal{C}_{A}^{i}$ is sufficiently small, so that $\mathcal{I}_{A}^1,\dots, \mathcal{I}_{A}^{d-1}$ are disjoint. For $\delta>0$, define the set 
$$V_{\delta}:=\{B\in\mathcal{S}_{deg}^d\ |\ \max_{1\leq i,j\leq d}\Abs{A_{i,j}-B_{i,j}}<\delta\}.$$
Using \eqref{eq:Dordered}, as well as the continuity of $\Phi_{1},\dots, \Phi_{d}$ and the fact that $V_{\delta}\subset\mathcal{S}_{deg}^d$, we can easily show that there exists $\delta>0$, such that for all $B\in V_{\delta}$,
\begin{align}\label{eq:PhiBordered}
\Phi_{1}(B)<\Phi_{2}(B)<\cdots<\Phi_{d-2}(B)<\Phi_{d-1}(B)=\Phi_{d}(B),
\end{align}
and 
\begin{align}\label{eq:PhiinIi}
\Phi_{i}(B)\in \mathcal{I}_{A}^i\ \ \ \ \ \text{for all}\ \ B\in V_{\delta}\ \text{ and }\ 1\leq i\leq d.
\end{align}
For such $\delta$, define the mapping $\kappa_{A}^{i}:V_{\delta}\rightarrow\mathcal{S}(d)$, by
\begin{align}\label{eq:holfunccal}
\kappa^{i}_{A}(B)
  &:=\frac{1}{2\pi\textbf{i}}\int_{\Cc_A^i}(\xi I_d-B)^{-1}d\xi.
\end{align}
The matrix $\kappa^{i}_{A}(B)$ is the projection over the sum of the eigenspaces associated to eigenvalues of $B$ inside of $\mathcal{I}_{A}^i$
(see \cite[page 200, Theorem 6]{lax}). Thus, using \eqref{eq:PhiBordered}, \eqref{eq:PhiinIi} and the fact that $\mathcal{I}_{A}^1,\dots,\mathcal{I}_{A}^{d-1}$ are disjoint, we conclude that $\kappa^{i}_{A}(B)$ is the projection over $\textbf{E}_{\Phi_{i}(B)}^B$, for all $1\leq i\leq d$.

 From \eqref{eq:holfunccal}, it follows that the  mapping $B\mapsto \kappa_{A}^{i}(B)$, defined on $ V_{\delta}$, is a continuous function of the entries of $B$. Let $v^{1},\dots, v^{d}$ denote the columns of $P$ and define
\begin{align}\label{eq:wireg}
w^{j}
  &:=\frac{\kappa_{A}^{j}(B)v^{j}}{\|\kappa_{A}^{j}(B)v^{j}\|},
\end{align}
for $1\leq j\leq d-1$ and 
\begin{align}\label{eq:wilast}
w^{d}
  &:=\frac{\kappa_{A}^{d}(B)v^{d}}{\|\kappa_{A}^{d}(B)v^{d}\|}-\frac{\langle\kappa_{A}^{d}(B)v^{d},\kappa_{A}^{d-1}(B)v^{d-1}\rangle}{\|\kappa_{A}^{d}(B)v^{d}\|\|\kappa_{A}^{d-1}(B)v^{d-1}\|^2}\kappa_{A}^{d-1}(B)v^{d-1}.
\end{align}
Since $\kappa^{j}_{A}(B)$ is the projection over $\textbf{E}_{\Phi_{i}(B)}^B$, for all $1\leq j\leq d$ and $B\in V_{\delta}$, we can easily check that $w^{1},\dots, w^{d}$ are orthonormal eigenvectors for $B$. Thus, using the continuity of $\kappa_{A}^{j}$ and the fact that $\kappa_{A}^{i}(A)v^j=v^j$ for all $1\leq j\leq d$, we deduce that there exists $\delta^{\prime}>0$, such that for all $B\in V_{\delta^{\prime}}$, the vectors $w^{1},\dots, w^{d}$ given by \eqref{eq:wireg} and \eqref{eq:wilast}, form an orthonormal base of eigenvectors for $B$ satisfying 
\begin{align*}
\max_{1\leq i,j\leq d}\Abs{v_{i}^{j}-w_{i}^{j}}<\varepsilon,
\end{align*}
where 
\begin{align*}
v^{j}=(v_{1}^j,\dots, v_{d}^j),\ \ \ \ \ \text{and}\ \ \ \ \ w^j=(w_1^j,\dots w_d^j).
\end{align*}
Thus, the matrix $Q=[w^1,\dots,w^d]$ satisfies $B=Q\Delta Q^{*}$ and \eqref{eq:QPDdelta1}, as required.
\end{proof}

The next result is the complex version of Lemma \ref{lem:eigenprojections}, where the sets $\Sc(d)$ and $\Oc(d)$ are replaced by $\Hc(d)$ and $\mathcal{U}(d)$, respectively.
\begin{Lemma}\label{lem:eigenprojectionsu}
Let $A$ be a $d\times d$ complex Hermitian matrix, with $|\textbf{Sp}(A)|=d-1$, such that
\begin{align*}
A
  &=PDP^*,
\end{align*}
for some $P\in\mathcal{U}(d)$ and $D\in\mathcal{D}(d)$. Then, for every $\varepsilon>0$, there exists $\delta>0$ such that for all $B\in\Hc_{deg}^d$ satisfying 
$$\max_{1\leq i,j\leq d}\Abs{A_{i,j}-B_{i,j}}<\delta,$$
there exist a spectral decomposition of the form $B=Q\Delta Q^*$, where $Q\in\mathcal{U}(d)$ and $\Delta\in\mathcal{D}(d)$ satisfy the relations
\begin{align*}
\max_{1\leq i,j\leq d}\Abs{Q_{i,j}-P_{i,j}}<\varepsilon\ \ \ \ \ \ \text{ and }\ \ \ \ \ \ \max_{1\leq i\leq d}\Abs{D_{i,i}-\Delta_{i,i}}<\varepsilon.
\end{align*}
\end{Lemma}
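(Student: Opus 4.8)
The plan is to mirror the proof of Lemma~\ref{lem:eigenprojections} almost verbatim, replacing the transpose by the conjugate transpose and $\Oc(d)$ by $\mathcal{U}(d)$ throughout, so the only points requiring care are those where the real-symmetric argument implicitly used $\R$ rather than $\C$. As before, the existence of $\Delta\in\mathcal{D}(d)$ with $\max_i|D_{i,i}-\Delta_{i,i}|<\varepsilon$ is immediate from continuity of $\Phi_1,\dots,\Phi_d$ and the hypothesis $|\textbf{Sp}(A)|=d-1$, which (after reordering the eigenvalues so that $\lambda_1<\cdots<\lambda_{d-2}<\lambda_{d-1}=\lambda_d$) forces the analogue of \eqref{eq:PhiBordered} and \eqref{eq:PhiinIi} on a small neighborhood $V_\delta\subset\Hc_{deg}^d$ of $A$. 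So the substance is again the construction of $Q\in\mathcal{U}(d)$ with $\max_{i,j}|Q_{i,j}-P_{i,j}|<\varepsilon$.

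First I would choose, for each $i=1,\dots,d$, a small smooth closed contour $\Cc_A^i\subset\C\setminus\textbf{Sp}(A)$ around $\lambda_i$, with $\Cc_A^{d-1}=\Cc_A^d$ and the interiors $\mathcal{I}_A^1,\dots,\mathcal{I}_A^{d-1}$ pairwise disjoint, and define the Riesz projections
\begin{align*}
\kappa_A^i(B):=\frac{1}{2\pi\textbf{i}}\int_{\Cc_A^i}(\xi I_d-B)^{-1}\,d\xi,
\end{align*}
for $B\in V_\delta$ with $\delta$ small enough that $\Phi_i(B)\in\mathcal{I}_A^i$ for all $i$. Exactly as in Lemma~\ref{lem:eigenprojections}, $\kappa_A^i(B)$ is the orthogonal projection onto $\textbf{E}_{\Phi_i(B)}^B$ (for a Hermitian $B$ the Riesz projection associated to a real spectral subset is self-adjoint, hence orthogonal), it depends continuously on the entries of $B$, and $\kappa_A^i(A)v^j=\delta_{ij}v^j$ where $v^1,\dots,v^d$ are the columns of $P$. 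The point where the complex structure intervenes is the construction of the $d$-th eigenvector: for $1\leq j\leq d-1$ set $w^j:=\kappa_A^j(B)v^j/\|\kappa_A^j(B)v^j\|$ as before, and for $w^d$ use the Gram--Schmidt correction
\begin{align*}
w^d:=\frac{\kappa_A^d(B)v^d}{\|\kappa_A^d(B)v^d\|}-\frac{\langle\kappa_A^d(B)v^d,\,\kappa_A^{d-1}(B)v^{d-1}\rangle}{\|\kappa_A^d(B)v^d\|\,\|\kappa_A^{d-1}(B)v^{d-1}\|^2}\,\kappa_A^{d-1}(B)v^{d-1},
\end{align*}
where now $\langle\cdot,\cdot\rangle$ is the Hermitian inner product on $\C^d$; one checks $\langle w^d,w^{d-1}\rangle=0$ using conjugate-linearity in the correct slot, and after renormalizing $w^d$ one still obtains an orthonormal family. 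Since $\kappa_A^i(B)v^d$ and $\kappa_A^{d-1}(B)v^{d-1}$ both lie in $\textbf{E}_{\Phi_d(B)}^B$ (because $\Phi_{d-1}(B)=\Phi_d(B)$ share the contour), $w^d$ is again an eigenvector of $B$, so $Q:=[w^1,\dots,w^d]\in\mathcal{U}(d)$ and $B=Q\Delta Q^*$.

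Finally, continuity of each $\kappa_A^i$ together with $\kappa_A^i(A)v^j=\delta_{ij}v^j$ gives $w^j\to v^j$ as $B\to A$ within $V_\delta$, uniformly in each coordinate, so there is $\delta'>0$ with $\max_{i,j}|Q_{i,j}-P_{i,j}|<\varepsilon$ for all $B\in V_{\delta'}\cap\Hc_{deg}^d$, which is the claim. I do not expect a genuine obstacle here: the only substantive check is that the Gram--Schmidt step is written with the Hermitian inner product conjugated in the slot that makes $w^d$ come out holomorphic-projection-valued and orthogonal to $w^{d-1}$; everything else — self-adjointness of the Riesz projections for Hermitian matrices, continuity of the resolvent integral, the ordering \eqref{eq:Dordered} — transfers from the real case without change.
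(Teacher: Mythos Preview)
Your proposal is correct and matches the paper's approach exactly: the paper's own proof simply states that the result ``follows from arguments similar to those used in the proof of Lemma~\ref{lem:eigenprojections},'' and what you have written is precisely that argument carried out with the Hermitian inner product and $\mathcal{U}(d)$ in place of $\Oc(d)$. Your remark that $w^d$ must be renormalized after the Gram--Schmidt subtraction is in fact a small clarification of the original formula \eqref{eq:wilast}.
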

\begin{proof}
It follows from arguments similar to those used in the proof of Lemma \ref{lem:eigenprojections}.
\end{proof}

Define the function $\Lambda:\R^{d-1}\rightarrow\mathcal{D}(d)$, that maps the vector $\beta=(\beta_{1},\dots, \beta_{d-1})\in\R^{d-1}$, to the matrix $\Lambda(\beta)=\{\Lambda_{i,j}(\beta);1\leq i,j\leq d\}$, given by   
\begin{align}\label{eq:Lambdadef}
\Lambda_{i,j}(\beta)
  &:=\left\{\begin{array}{lll}\delta_{i,j}\beta_{i} &\text{ if }  &1\leq i\leq d-2\\
	\delta_{i,j}\beta_{d-1}                             &\text{ if }  &i=d-1,d.\end{array}\right.
\end{align}
In the next  proposition, we bound from above the set $\Sc_{deg}^d$.
\begin{Proposition}\label{lem:geometricaux1}
There exists a compactly supported smooth function $\Pi:\R^{\frac{d(d-1)}{2}-1}\rightarrow\R^{d\times d}$, such that the mapping $F:\R^{\frac{d(d-1)}{2}-1}\times\R^{d-1} \rightarrow  \mathcal{S}(d)$, defined by
\begin{align}\label{eq:Fdef}
F(\alpha,\beta):=\Pi(\alpha)\Lambda(\beta)\Pi(\alpha)^*,
\end{align}
for $\alpha\in \R^{\frac{d(d-1)}{2}-1}$ and $\beta\in\R^{d-1}$, satisfies 
\begin{align}\label{inclusion:sdeggoe}
\Sc_{deg}^d\subset\mathcal{S}_{out}^d:=\{x\in\R^{n_1(d)} : \hat{x}\in  {\rm Im}(F)\}.
\end{align}
\end{Proposition}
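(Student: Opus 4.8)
The plan is to build the function $\Pi$ explicitly out of an orthonormal basis adapted to a degenerate symmetric matrix, and then to verify the inclusion \eqref{inclusion:sdeggoe} by a direct spectral-decomposition argument. The point is that any $\hat x\in\Sc_{deg}^d$ has a repeated eigenvalue, so after ordering the eigenvalues we may always write $\hat x = P\Lambda(\beta) P^*$ for some $P\in\Oc(d)$ and some $\beta\in\R^{d-1}$, where $\Lambda$ is the map \eqref{eq:Lambdadef} that forces the $(d-1)$-st and $d$-th diagonal entries to coincide. The only thing standing between this and the statement is that $\Oc(d)$ is a $\frac{d(d-1)}{2}$-dimensional manifold, whereas we are asked for a domain of dimension $\frac{d(d-1)}{2}-1$; the extra dimension is absorbed because the last eigenvalue is repeated — rotating in the degenerate $2$-plane $\mathrm{span}(v^{d-1},v^d)$ does not change $\hat x$. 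So effectively the image of $F$ only needs to sweep out $\Oc(d)/\mathrm{SO}(2)$ in that plane, which is $\frac{d(d-1)}{2}-1$-dimensional.

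Concretely, I would proceed as follows. First, choose a smooth surjective (or at least image-covering) parametrization of $\Oc(d)$ modulo the $\mathrm{SO}(2)$-action on the last two coordinates. One clean way: pick a smooth chart-like map $G:\R^{\frac{d(d-1)}{2}-1}\to\Oc(d)$ whose image contains a full set of representatives, e.g. build $P$ column by column — the first $d-2$ columns $w^1,\dots,w^{d-2}$ range over an open dense piece of the appropriate Stiefel-type variety (this costs $\sum_{k=0}^{d-3}(d-1-k) = \frac{(d-1)(d-2)}{2} + (d-2)$ parameters... I would instead just take the standard local-chart construction and note that finitely many charts, or one chart of full dimension composed with a quotient, suffices), then fix $w^{d-1},w^d$ to be any smooth choice of an orthonormal basis of the remaining $2$-plane — and crucially, because $\Lambda(\beta)$ has equal last two eigenvalues, the resulting $P\Lambda(\beta)P^*$ does not depend on which orthonormal basis of that $2$-plane we chose. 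Then compose with a smooth compactly supported cutoff: since $\Sc_{deg}^d$ is unbounded, I cannot literally have $\Pi$ compactly supported and $F$ surjective onto all of $\Sc_{deg}^d$ simultaneously. I would resolve this the way the statement implicitly intends — the spectral parameter $\beta$ carries the non-compactness (it ranges over all of $\R^{d-1}$), while $\Pi$ only needs to encode the compact orthogonal part, so truncating $\Pi$ to a compact set is harmless provided $\Pi$ covers all of $\Oc(d)$ (or enough of it); one uses that $\Oc(d)$ is compact, covers it by finitely many chart images inside balls, and glues these via a smooth partition of unity into a single compactly supported $\Pi:\R^{\frac{d(d-1)}{2}-1}\to\R^{d\times d}$ whose image still contains every needed $P$.

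The verification of \eqref{inclusion:sdeggoe} is then short: given $x\in\Sc_{deg}^d$, write $\hat x$'s ordered eigenvalues, which by definition of $\Sc_{deg}^d$ include a coincidence $\Phi_i(x)=\Phi_j(x)$; by reshuffling (a constant orthogonal permutation, which is in $\Oc(d)$ and hence in the image of $\Pi$ after the construction above) we may assume the repeated pair sits in positions $d-1,d$, so the diagonal eigenvalue matrix is exactly $\Lambda(\beta)$ for $\beta=(\Phi_1,\dots,\Phi_{d-2},\Phi_{d-1})$, and $\hat x = P\Lambda(\beta)P^* = \Pi(\alpha)\Lambda(\beta)\Pi(\alpha)^* = F(\alpha,\beta)$ for the $\alpha$ with $\Pi(\alpha)=P$. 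Hence $x\in\Sc_{out}^d$.

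The main obstacle is the bookkeeping in the first paragraph's dimension count: making a genuinely \emph{smooth} global map $\Pi$ of domain dimension exactly $\frac{d(d-1)}{2}-1$ whose image covers $\Oc(d)$ \emph{up to} the last-two-column rotation, and doing so with compact support. One has to be careful that the quotient $\Oc(d)/\mathrm{SO}(2)$ (acting on the last $2$-plane) is a manifold of the right dimension $\frac{d(d-1)}{2}-1$ and admits such a covering; the partition-of-unity gluing then handles both the "finitely many charts" issue and the compact support. Everything downstream — that $F$ is smooth, that $\Lambda$ is smooth, that $F(\alpha,\beta)=\Pi(\alpha)\Lambda(\beta)\Pi(\alpha)^*$ lands in $\Sc(d)$ — is immediate from the explicit formulas.
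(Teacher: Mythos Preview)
Your outline is essentially the paper's own argument: parametrize the first $d-2$ orthonormal eigenvector columns---equivalently, the Stiefel manifold $\mathcal{O}(d;2)$, which has exactly the needed dimension $\tfrac{d(d-1)}{2}-1$---smoothly extend each such $(d-2)$-frame to a full orthonormal basis by Gram--Schmidt, cover the compact $\mathcal{O}(d;2)$ by finitely many such local charts $\Pi^1,\dots,\Pi^L$, and verify \eqref{inclusion:sdeggoe} via the observation that any orthonormal basis of the repeated-eigenvalue $2$-plane yields the same $P\Lambda(\beta)P^*$.

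One technical caution: do not glue the charts with a partition of unity. A convex combination $\sum_l\rho_l\Pi^l(\alpha)$ of orthogonal matrices is generally not orthogonal, so at overlap points $F(\alpha,\beta)$ would no longer produce the intended spectral decomposition and the image could miss parts of $\Sc_{deg}^d$. The paper's fix is simpler and is what you actually want: translate the $L$ local domains to pairwise disjoint cubes in $\R^{\frac{d(d-1)}{2}-1}$ and define the single compactly supported $\Pi$ piecewise (e.g.\ $\Pi(x):=\Pi^l(x-c_l)$ on a ball around $c_l$, zero elsewhere), so that each $\Pi^l$'s image is reproduced verbatim inside the image of $\Pi$.
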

\begin{proof}
For $\varepsilon>0$, define the interval $J_{\varepsilon}:=(-\varepsilon,\varepsilon)^{\frac{d(d-1)}{2}-1}$. First we reduce the problem, to proving that there exist $L\in\N$ and smooth functions $\Pi^{1},\dots, \Pi^{L}:\R^{\frac{d(d-1)}{2}-1}\rightarrow \R^{d\times d}$,  supported in $J_{\varepsilon}$, such that the mappings 
$F^l:J_{\varepsilon}\times\R^{d-1}  \rightarrow  \mathcal{S}(d)$, defined by
\begin{align}\label{eq:Fidef}
F^{l}(\alpha,\beta):=\Pi^{l}(\alpha)\Lambda(\beta)\Pi^{l}(\alpha)^*,
\end{align}
for $1\leq l\leq L$, $\alpha\in J_{\varepsilon}$ and $\beta\in \R^{d-1}$, satisfy 
\begin{align}\label{eq:Scdegfmany}
\Sc_{deg}^d
  &\subset\{x\in\R^{n_1(d)} : \hat{x}\in \bigcup_{l=1}^{L}{\rm Im}(F^{l})\}.
\end{align}
 To show this reduction, notice that if \eqref{eq:Scdegfmany} holds, then any smooth function $\Pi$, supported in $J_{3\varepsilon L}$, satisfying
\begin{align*}
\Pi(x)
  &:=\Pi^{l}(x-3l\varepsilon,0,\dots,0))\ \ \ \ \ \ \text{if}\ \ \ \ \ \ x\in B_{\varepsilon}(3l\varepsilon,0,\dots,0)\subset\R^{\frac{d(d-1)}{2}-1},
\end{align*}
is such that the mapping \eqref{eq:Fdef} satisfies \eqref{inclusion:sdeggoe}.

  Therefore, it suffices to find $\Pi^{1},\dots, \Pi^{L}$. The heuristics for constructing such functions is the following: every matrix $X\in\Sc_{deg}^d$ can be expressed in the form 
$$X=PDP^*,$$
with $D\in\mathcal{D}(d)$ and $P\in\Oc(d)$. Since $X$ is degenerate, we have some flexibility for choosing $P$, due to the fact that if $X$ has eigenvalues $\mu_{1},\dots, \mu_{d}$, and $\mu_{h}=\mu_{h+1}$, then the eigenspaces $\textbf{E}_{\mu_{j}}^{X}$, with $\mu_j\neq \mu_h$, completely determine $\textbf{E}_{\mu_{h}}^{X}$. This allows us to construct $P$ by describing only the eigenvectors associated to $\textbf{E}_{\mu_{j}}^{X}$, with $\mu_j\neq \mu_h$. We can show that these spaces can be locally embedded into the set $\mathcal{O}(d;2)$, which has dimension $\frac{d(d-1)}{2}-1$. Then we extend such local embeddings to compactly supported $\R^{d\times d}$-valued functions, and apply a compactness argument to obtain the existence of $\Pi^{1},\dots, \Pi^{L}$.

 The detailed construction is as follows. For each matrix $R\in\mathcal{O}(d;2)$, we have that $R^*R=I_{d-2}$, and thus, the columns of $R$ are orthonormal. As a consequence, by completing $\{R_{*,1},\dots, R_{*,d-2}\}$ to an orthonormal basis of $\R^{d}$, we can choose an element $P\in \mathcal{O}(d)$,  such that $P_{*,j}=R_{*,j}$ for all $1\leq j\leq d-2$.  Since $\mathcal{O}(d;2)$ is a smooth manifold of dimension $\frac{d(d-1)}{2}-1$, we have that if $\gamma>0$ is sufficiently small, the set $\mathcal{O}(d;2)\cap B_{\gamma}(R)$ can be parametrized with a chart $\varphi$, defined on $J_{\varepsilon}$, for some $\varepsilon>0$. Namely, the mapping 
$$\varphi:J_{\varepsilon}\rightarrow \mathcal{O}(d;2)\cap B_{\gamma}(R)$$
is a diffeomorphism satisfying $\varphi(0)=R$. Denote by $\varphi_{*,j}$ the $j$-th column vector of $\varphi$. By construction, every matrix $S\in\Oc(d;2)$ of the form $S=\varphi(\alpha)$, with $\alpha\in J_{\varepsilon}$, satisfies $\Norm{P_{*,j}-S_{*,j}}<\gamma$ for all $1\leq j\leq d-2$, and thus, for $\gamma$ sufficiently small,
\begin{multline*}
\Abs{\|P_{*,d-1}-\sum_{j=1}^{d-2}\Ip{S_{*,j},P_{*,d-1}}S_{*,j}\|-1}\\
\begin{aligned}
  &=\Abs{\|P_{*,d-1}-\sum_{j=1}^{d-2}\Ip{S_{*,j},P_{*,d-1}}S_{*,j}\|-\|P_{*,d-1}-\sum_{j=1}^{d-2}\Ip{P_{*,j},P_{*,d-1}}P_{*,j}\|}<\frac{1}{2}.
\end{aligned}
\end{multline*} 
As a consequence, $\big\|P_{*,d-1}-\sum_{j=1}^{d-2}\Ip{\varphi_{*,j}(\alpha),P_{*,j}}\varphi_{*,j}(\alpha)\big\|$ is bounded away from zero for all $\alpha\in J_{\varepsilon}$, and hence, the mapping $\alpha\mapsto\psi_{1}(\alpha)$, with 
\begin{align}\label{eq:psi1def}
\psi_{1}(\alpha)
  &:=\frac{P_{*,d-1}-\sum_{j=1}^{d-2}\Ip{\varphi_{*,j}(\alpha),P_{*,d-1}}\varphi_{*,j}(\alpha)}{\big\|P_{*,d-1}-\sum_{j=1}^{d-2}\Ip{\varphi_{*,j}(\alpha),P_{*,d-1}}\varphi_{*,j}(\alpha)\big\|}
\end{align}
is smooth. Proceeding similarly, we can show that for $\gamma$ sufficiently small, the mapping $\alpha\mapsto\psi_{2}(\alpha)$, with
\begin{align}\label{eq:psi2def}
\psi_{2}(\alpha)
  &:=\frac{P_{*,d}-\Ip{\psi_{1}(\alpha),P_{*,d}}\psi_{1}(\alpha)-\sum_{j=1}^{d-2}\Ip{\varphi_{*,j}(\alpha),P_{*,d}}\varphi_{*,j}(\alpha)}{\big\|P_{*,d}-\Ip{\psi_{1}(\alpha),P_{*,d}}\psi_{1}(\alpha)-\sum_{j=1}^{d-2}\Ip{\varphi_{*,j}(\alpha),P_{*,d}}\varphi_{*,j}(\alpha)\big\|}
\end{align}
is smooth as well. Let $\Pi:\R^{\frac{d(d-1)}{2}-1}\rightarrow\R^{d\times d}$ be any  smooth function, supported in $J_{\varepsilon}$, such that for all $\alpha\in J_{\varepsilon/2}$, 
\begin{align}\label{eq:Pidef}
\Pi_{*,j}(\alpha)
  &:=\left\{\begin{array}{ll}\varphi_{*,j}(\alpha)& \text{ if } 1\leq j\leq d-2\\\psi_{1}(\alpha)& \text{ if } j=d-1\\\psi_{2}(\alpha)& \text{ if } j=d.\end{array}\right.
\end{align}
By construction, $\Pi$ has the property that 
\begin{align}
V_{\Pi}^R
  &:=\{[\Pi_{*,1}(\alpha),\dots, \Pi_{*,d-2}(\alpha)] : \alpha\in J_{\varepsilon/2}\}=\varphi(J_{\varepsilon/2}),\label{eq:Vdef}
\end{align}
is an open subset of $\mathcal{O}(d;2)$ containing $R$. Therefore, since $\mathcal{O}(d;2)$ is compact and the collection of sets $\{V_{\Pi}^R : R\in\mathcal{O}(d;2)\}$ is an open cover for $\mathcal{O}(d;2)$, we deduce that there exist $L\in\N$ and smooth $\R^{d\times d}$-valued functions $\Pi^{1},\dots, \Pi^{L}$ of the form \eqref{eq:Pidef}, supported in intervals of the form $J_{\varepsilon_l}$, with $\varepsilon_l>0$, such that the sets 
\begin{align*}
V_{l}
  &=\{[\Pi_{*,1}^l(\alpha),\dots, \Pi_{*,d-2}^l(\alpha)] : \alpha\in J_{\varepsilon/2}\},
\end{align*}
satisfy
\begin{align}\label{eq:Vinclusion}
\mathcal{O}(d;2)
  &= V_{1}\cup\cdots\cup V_{L}.
\end{align}
In the sequel, we will assume without loss of the generality that there exists $\varepsilon>0$, such that $\varepsilon_{l}=\varepsilon$ for all $l=1,\dots, L$.

 By construction, the functions $\Pi^{1},\dots, \Pi^{L}$ are smooth and compactly supported, so it suffices to show that
$$\mathcal{S}_{deg}^d\subset\bigcup_{1\leq l\leq L}\{x\in\R^{n_1(d)} : \hat{x}\in {\rm Im}(F^{l})\},$$
where $F^{1},\dots F^{L}$ are defined by \eqref{eq:Fidef}. To this end, take $x\in\mathcal{S}_{deg}$ and let $Q\in\mathcal{O}(d)$ and $\Delta\in\mathcal{D}(d)$ be such that $\hat{x}=Q\Delta Q^*$. By permuting the diagonal of $\Delta$ and the columns of $Q$ if necessary, we can assume that $\Delta_{d-1,d-1}=\Delta_{d,d}$. Applying \eqref{eq:Vinclusion} to $[Q_{*,1},\dots, Q_{*,d-2}]\in\mathcal{O}(d;2)$, we deduce that there exist $1\leq l\leq L$ and  $\alpha\in J_{\varepsilon}$, such that $[Q_{*,1},\dots, Q_{*,d-2}]=[\Pi_{*,1}^{l}(\alpha),\dots,\Pi_{*,d-2}^{l}(\alpha)]$.

  Let $\Delta=\Lambda(\beta)$ for $\beta\in\R^{d-1}$. To finish the proof, it suffices to show that $\hat{x}=\Pi^{l}(\alpha)\Lambda(\beta)\Pi^{l}(\alpha)^*$. By construction, 
$$\{\Pi_{*,1}^{l}(\alpha),\dots,\Pi_{*,d}^{l}(\alpha)\}\ \ \ \ \ \ \ \text{and}\ \ \ \ \ \ \ \{Q_{*,1},\dots, Q_{*,d}\}$$ 
are orthonormal bases of $\R^{d}$ satisfying 
$$\{\Pi_{*,1}^{l}(\alpha),\dots, \Pi_{*,d-2}^{l}(\alpha)\}=\{Q_{*,1},\dots, Q_{*,d-2}\}.$$ 
Thus,  $\text{span}\{\Pi_{*,d-1}^{l}(\alpha),\Pi_{*,d}^{^{l}}(\alpha)\}=\text{span}\{Q_{*,d-1},Q_{*,d}\}$. In particular, $\text{span}\{\Pi_{*,d-1}^{l}(\alpha),\Pi_{*,d}^{l}(\alpha)\}$ is contained in the eigenspace associated to $\Delta_{d-1,d-1}$, which implies that $\Pi_{*,d-1}^{l}(\alpha),\Pi_{*,d}^{l}(\alpha)$ are orthonormal eigenvectors of $\hat{x}$ with eigenvalue $\Delta_{d-1,d-1}$. From here we conclude that $\{\Pi_{*,1}^{l}(\alpha),\dots,\Pi_{*,d}^{l}(\alpha)\}$ is a basis of eigenvectors for $\hat{x}$, hence implying that 
\begin{align*}
\hat{x}=\Pi^{l}(\alpha)\Lambda(\beta)\Pi^{l}(\alpha)^*,
\end{align*}
as required.
\end{proof}

 In the next proposition, we bound from above the set $\Hc_{deg}^d$.

\begin{Proposition}  \label{propaux1a}
There exists a compactly supported  smooth function $\widetilde{\Pi}:\R^{d^2-d-2}\rightarrow\C^{d\times d}$, such that the mapping $\widetilde{F}:\R^{d^2-d-2}\rightarrow  \Hc(d)$, defined by
\begin{align}\label{eq:Fuildedef}
\widetilde{F}(\alpha,\beta):=\widetilde{\Pi}(\alpha)\Lambda(\beta)\widetilde{\Pi}(\alpha)^*,
\end{align}
for $\alpha\in \R^{d^2-d-2}$ and $\beta\in\R^{d-1}$, satisfies 
\begin{align}\label{inclusion:sdeggue}
\Hc_{deg}^d\subset\{x\in\R^{n_2(d)} : \hat{x}\in{\rm Im}(\widetilde{F})\}.
\end{align}
\end{Proposition}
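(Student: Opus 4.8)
The plan is to mimic the proof of Proposition \ref{lem:geometricaux1}, the one essential new ingredient being that over $\C$ an eigenvector is determined only up to a phase; this is precisely why the relevant parameter space has dimension $d^2-d-2$, arising from the slices $\mathcal{V}_{\gamma}^{R}$ of $\mathcal{U}(d;2)$ introduced in the lemma above (recall $\dim\mathcal{U}(d;2)=d^2-4$ while $\dim\mathcal{V}_{\gamma}^{R}=d^2-d-2$). As in the proof of Proposition \ref{lem:geometricaux1}, I would first reduce the statement to producing $L\in\N$ and compactly supported smooth maps $\widetilde{\Pi}^{1},\dots,\widetilde{\Pi}^{L}:\R^{d^2-d-2}\to\C^{d\times d}$, each supported in a cube $J_{\varepsilon}=(-\varepsilon,\varepsilon)^{d^2-d-2}$, such that $\Hc_{deg}^d\subset\bigcup_{l=1}^{L}\{x:\hat x\in{\rm Im}(\widetilde{F}^{l})\}$, where $\widetilde{F}^{l}(\alpha,\beta):=\widetilde{\Pi}^{l}(\alpha)\Lambda(\beta)\widetilde{\Pi}^{l}(\alpha)^*$; one then glues the $\widetilde{\Pi}^{l}$ into a single $\widetilde{\Pi}$ by translating their supports into pairwise disjoint cubes, exactly as in the real case.

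To build the $\widetilde{\Pi}^{l}$, I would fix $R\in\mathcal{U}(d;2)$. By the lemma above, there is $\gamma>0$ for which $\mathcal{V}_{\gamma}^{R}$ from \eqref{eq:Vtilde} is a $(d^2-d-2)$-dimensional submanifold of $\mathcal{U}(d;2)\cap B_{\gamma}(R)$; since $\Ip{R_{*,j},R_{*,j}}=1=|\Ip{R_{*,j},R_{*,j}}|$ we have $R\in\mathcal{V}_{\gamma}^{R}$, hence a chart $\varphi=\varphi^{R}:J_{\varepsilon}\to\mathcal{V}_{\gamma}^{R}$ with $\varphi(0)=R$ and $\varphi(J_{\varepsilon/2})$ an open neighbourhood of $R$ in $\mathcal{V}_{\gamma}^{R}$. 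Completing $\{R_{*,1},\dots,R_{*,d-2}\}$ to an orthonormal basis of $\C^d$ gives $P\in\mathcal{U}(d)$ with $P_{*,j}=R_{*,j}$ for $1\le j\le d-2$; applying complex Gram--Schmidt to $P_{*,d-1},P_{*,d}$ against $\varphi_{*,1}(\alpha),\dots,\varphi_{*,d-2}(\alpha)$, and using for $\gamma$ small the same estimate as in the proof of Proposition \ref{lem:geometricaux1} to keep the relevant norms bounded away from zero, one gets smooth unit vectors $\psi_{1}(\alpha),\psi_{2}(\alpha)$ completing $\varphi_{*,1}(\alpha),\dots,\varphi_{*,d-2}(\alpha)$ to an orthonormal basis of $\C^d$. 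Then I would take $\widetilde{\Pi}=\widetilde{\Pi}^{R}$ compactly supported in $J_{\varepsilon}$ and equal, for $\alpha\in J_{\varepsilon/2}$, to the matrix with columns $\varphi_{*,1}(\alpha),\dots,\varphi_{*,d-2}(\alpha),\psi_{1}(\alpha),\psi_{2}(\alpha)$, so that $\widetilde{\Pi}(\alpha)\in\mathcal{U}(d)$ and its first $d-2$ columns describe $\varphi(J_{\varepsilon/2})\subset\mathcal{V}_{\gamma}^{R}$ whenever $\alpha\in J_{\varepsilon/2}$.

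The key step is then a covering argument. I would let $\mathbb{T}^{d-2}$ act on $\mathcal{U}(d;2)$ by multiplying the $j$-th column by $e^{\textbf{i}\theta_j}$, $1\le j\le d-2$; since the map $f$ from \eqref{f:def} intertwines this action with the translation action on $\mathbb{T}^{d-2}$ and $\vec{1}$ is a regular value of $f$, the set $\mathbb{T}^{d-2}\cdot\varphi^{R}(J_{\varepsilon/2})$ contains an open neighbourhood of $R$ in $\mathcal{U}(d;2)$, so by compactness of $\mathcal{U}(d;2)$ there are $R_{1},\dots,R_{L}$ (with a common $\varepsilon$) with $\mathcal{U}(d;2)=\bigcup_{l=1}^{L}\mathbb{T}^{d-2}\cdot\varphi^{R_l}(J_{\varepsilon/2})$. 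Finally, given $x\in\Hc_{deg}^d$, I would write $\hat x=Q\Delta Q^*$ with $Q\in\mathcal{U}(d)$, $\Delta\in\mathcal{D}(d)$, and permute so that $\Delta_{d-1,d-1}=\Delta_{d,d}$; then $[Q_{*,1},\dots,Q_{*,d-2}]\in\mathcal{U}(d;2)$ lies in some $\mathbb{T}^{d-2}\cdot\varphi^{R_l}(J_{\varepsilon/2})$, yielding $\alpha\in J_{\varepsilon/2}$ and phases with $Q_{*,j}=e^{\textbf{i}\theta_j}\widetilde{\Pi}^{l}_{*,j}(\alpha)$ for $1\le j\le d-2$. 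Hence each $\widetilde{\Pi}^{l}_{*,j}(\alpha)$ is a unit eigenvector of $\hat x$ with eigenvalue $\Delta_{j,j}$, while $\text{span}\{\widetilde{\Pi}^{l}_{*,d-1}(\alpha),\widetilde{\Pi}^{l}_{*,d}(\alpha)\}$ is the orthogonal complement of $\text{span}\{Q_{*,1},\dots,Q_{*,d-2}\}$, i.e.\ $\text{span}\{Q_{*,d-1},Q_{*,d}\}\subset\textbf{E}_{\Delta_{d-1,d-1}}^{\hat x}$, so these two columns are eigenvectors of $\hat x$ with eigenvalue $\Delta_{d-1,d-1}$. Writing $\Delta=\Lambda(\beta)$ gives $\hat x=\widetilde{\Pi}^{l}(\alpha)\Lambda(\beta)\widetilde{\Pi}^{l}(\alpha)^*$, which is \eqref{inclusion:sdeggue}.

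The hard part will be the covering argument: one must verify carefully that the torus-saturations $\mathbb{T}^{d-2}\cdot\varphi^{R}(J_{\varepsilon/2})$ are genuinely open in $\mathcal{U}(d;2)$ — equivalently, that the phase-normalization defining $\mathcal{V}_{\gamma}^{R}$ in \eqref{eq:Vtilde} really produces a slice transverse to the torus orbits — so that the compactness argument delivers finitely many charts. Everything else is a faithful transcription of the real-symmetric argument in Proposition \ref{lem:geometricaux1}, with $\Oc(d),\Oc(d;2),\Sc(d)$ replaced by $\mathcal{U}(d),\mathcal{U}(d;2),\Hc(d)$ and real Gram--Schmidt by its complex counterpart.
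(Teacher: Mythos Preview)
Your proposal is correct and follows essentially the same strategy as the paper: reduction to finitely many local maps $\widetilde{\Pi}^{l}$, construction of each from a chart of the slice $\mathcal{V}_{\gamma}^{R}$ extended by complex Gram--Schmidt, a compactness argument over $\mathcal{U}(d;2)$, and the final verification that $\hat{x}=\widetilde{\Pi}^{l}(\alpha)\Lambda(\beta)\widetilde{\Pi}^{l}(\alpha)^{*}$. The only cosmetic difference is in the covering step you flagged as ``hard'': the paper introduces an explicit phase-normalization map $\zeta^{R}(A)_{*,j}:=\Ip{A_{*,j},R_{*,j}}^{-1}|\Ip{A_{*,j},R_{*,j}}|A_{*,j}$ to retract a neighbourhood of $R$ in $\mathcal{U}(d;2)$ onto the slice $\mathcal{V}_{\gamma}^{R}$, whereas you obtain the same open neighbourhoods by saturating the slice under the $\mathbb{T}^{d-2}$-action---these are dual descriptions of the same transversality.
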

\begin{proof}
For $\varepsilon>0$, set $\widetilde{J}_{\varepsilon}:=(-\varepsilon,\varepsilon)^{d^2-d-2}$. Similarly to the proof of Proposition \ref{lem:geometricaux1}, it suffices to show that there exist $M\in\N$ and smooth $\C^{d\times d}$-valued functions $\widetilde{\Pi}^{l}$, with $1\leq l\leq M$, supported in $\widetilde{J}_{\varepsilon}$, with $\varepsilon>0$, such that the mappings $\widetilde{F}^{l}:\widetilde{J}_{\varepsilon}\times\R^{d-1} \rightarrow  \mathcal{H}(d)$,  defined by
\begin{align}\label{eq:Ftildeidef}
\begin{array}{lll}
\widetilde{F}^{l}(\alpha,\beta)
  &:=\widetilde{\Pi}^{l}(\alpha)\Lambda(\beta)\widetilde{\Pi}^{l}(\alpha)^*,
\end{array}
\end{align}
satisfy 
\begin{align}\label{eq:Hcdegfmany}
\Hc_{deg}^d
  \subset&\Hc_{out}^d:=\{x\in\R^{n_2(d)} : \hat{x}\in \bigcup_{l=1}^{M}{\rm Im}(\widetilde{F}^{l})\}.
\end{align}

\noindent For each $R\in\mathcal{U}(d;2)$, choose a unitary matrix $P\in\mathcal{U}(d)$, such that $P_{i,j}=R_{i,j}$ for all $1\leq i\leq d$ and $1\leq j\leq d-2$. Using the fact that the set $\mathcal{V}_{\nu}^{R}$, defined by \eqref{eq:Vtilde}, is a smooth manifold of dimension $d^2-d-2$ for $\nu$ sufficiently small, it follows that there exist $\varepsilon,\gamma>0$, and a smooth diffeomorphism $\widetilde{\varphi}:\widetilde{J}_{\varepsilon}\rightarrow \mathcal{V}_{\gamma}^{R}$, such that $\widetilde{\varphi}(0)=R$. Moreover, as in the proof of Proposition \ref{lem:geometricaux1}, if $\gamma$ is sufficiently small, the mappings $\widetilde{\psi}_{1}$ and $\widetilde{\psi}_{2}$ defined as in \eqref{eq:psi1def} and \eqref{eq:psi2def} (when $\varphi$ is replaced by $\widetilde{\varphi}$),
are smooth. Let $\widetilde{\Pi}:\R^{d^2-d-2}\rightarrow\C^{d\times d}$ be any smooth function, supported in $\widetilde{J}_{\varepsilon}^d$, such that for all $\alpha\in \widetilde{J}_{\varepsilon/2}$,
\begin{align}\label{eq:Pitildedef}
\widetilde{\Pi}_{*,j}(\alpha)
  &:=\left\{\begin{array}{ll}\widetilde{\varphi}_{*,j}(\alpha)& \text{ if } 1\leq j\leq d-2\\\widetilde{\psi}_{1}(\alpha)& \text{ if } j=d-1\\\widetilde{\psi}_{2}(\alpha)& \text{ if } j=d.\end{array}\right.
\end{align}
Define the function $\zeta^R:\mathcal{U}(d;2)\cap B_{\gamma}(R)\rightarrow \mathcal{U}(d;2)$ by $\zeta^R(A)=\{\zeta_{i,j}^R(A) ;  1\leq i\leq d\ \ \text{ and }\ \ 1\leq j\leq d-2\}$, where
$$\zeta_{*,j}^R(A):=\Ip{A_{*,j},R_{*,j}}^{-1}|\Ip{A_{*,j},R_{*,j}}|A_{*,j},$$
and the set
\begin{align*}
V_{\widetilde{\Pi},\delta}^R:=\{[\widetilde{\Pi}_{*,1}(\alpha),\dots, \widetilde{\Pi}_{*,d-2}(\alpha)] :  \alpha\in \widetilde{J}_{\delta}\}=\widetilde{\varphi}(\widetilde{J}_{\delta}),
\end{align*}
for $0<\delta<\varepsilon$. By the continuity of the inner product in $\C^{d}$, there exists $0<\varepsilon^{\prime}<\varepsilon/2$, such that 
$$\zeta^R(\widetilde{\varphi}(\widetilde{J}_{\varepsilon^{\prime}}))\subset \widetilde{\varphi}(\widetilde{J}_{\varepsilon}).$$
By construction, $\widetilde{\Pi}(0)=P$ and $V_{\widetilde{\Pi},\varepsilon^{\prime}}^R$ is an open subset of $\mathcal{U}(d;2)$ containing $R$, such that 
\begin{align*}
\zeta^R(V_{\widetilde{\Pi},\varepsilon^{\prime}}^R)
  &\subset V_{\widetilde{\Pi},\varepsilon}^R.
\end{align*}
Therefore, since $\mathcal{U}(d;2)$ is compact and the collection $\{V_{\widetilde{\Pi},\varepsilon^{\prime}}^R : R\in\mathcal{U}(d;2)\}$ is an open cover for $\mathcal{U}(d;2)$, we deduce that there exist $M\in\N$, $\varepsilon_{1}^{\prime},\varepsilon_1,\dots, \varepsilon_{M}^{\prime},\varepsilon_{M}>0$ and smooth $\C^{d\times d}$-valued functions $\widetilde{\Pi}^{1},\dots, \widetilde{\Pi}^{M}$, supported in intervals of the form $\widetilde{J}_{\varepsilon_l}$, with $\varepsilon_{l}^{\prime}<\varepsilon_l/2$, such that the sets 
\begin{align*}
\widetilde{V}_{l}
  &:=\{[\widetilde{\Pi}_{*,1}^l(\alpha),\dots, \widetilde{\Pi}_{*,d-2}^l(\alpha)] : \alpha\in \widetilde{J}_{\varepsilon_l/2}\},
\end{align*}
satisfy
\begin{align}\label{eq:Vtildeinclusion}
\mathcal{U}(d;2)
  &= \widetilde{V}_{1}\cup\cdots\cup \widetilde{V}_{M},
\end{align}
and the matrices $R_l:=[\widetilde{\Pi}_{*,1}^{l}(0),\dots, \widetilde{\Pi}_{*,d-2}^{l}(0)]$, with $1\leq l\leq M$, satisfy 
\begin{align}\label{eq:xiPlinclusion}
\zeta^{R_l}(V_{\widetilde{\Pi}^l,\varepsilon_l^{\prime}}^R)
  &\subset V_{\widetilde{\Pi}^l,\frac{\varepsilon_l}{2}}^R.
\end{align}
In the sequel, we will assume without loss of the generality that there exist $\varepsilon,\varepsilon^{\prime}>0$, such that $\varepsilon_{l}=\varepsilon$ and $\varepsilon_l^{\prime}=\varepsilon^{\prime}$ for all $l=1,\dots, M$.

 By construction, the functions $\widetilde{\Pi}^{1},\dots, \widetilde{\Pi}^{M}$ are smooth and supported in $\widetilde{J}_{\varepsilon}$, so it suffices to show relation \eqref{eq:Hcdegfmany}.  To this end, take $x\in\mathcal{H}_{deg}^d$ and let $\Delta\in\mathcal{D}(d)$, $Q\in\mathcal{U}(d)$ be such that
\begin{align}\label{eq:Bdecomp}
\hat{x}
  &=Q\Delta Q^*.
\end{align}
As in the proof of Proposition \ref{lem:geometricaux1}, we can assume  that $\Delta_{d-1,d-1}=\Delta_{d,d}$ and thus there exists $\beta\in \R^{d-1}$ such that $\Delta=\Lambda(\beta)$. Let $B\in\C^{d\times(d-2)}$ be given by $B_{i,j}=Q_{i,j}$, for $1\leq i\leq d$ and $1\leq j\leq d-2$. By \eqref{eq:Vtildeinclusion}, there exists $1\leq l_{0}\leq M$, such that $B\in \widetilde{\Pi}^{l_{0}}(\widetilde{J}_{\varepsilon^{\prime}})$. Define $P:=\widetilde{\Pi}^{l_{0}}(0)$ and $R\in\C^{d\times(d-2)}$ by $R_{i,j}:=P_{i,j}$ for all $1\leq i\leq d$ and $1\leq j\leq d-2$. Notice that the decomposition \eqref{eq:Bdecomp} still holds if the columns of $Q$ are multiplied by any complex number of unit length. Moreover, by \eqref{eq:xiPlinclusion}, $\zeta^{R}(B)$ belongs to $V_{\widetilde{\Pi}^{l_{0}},\frac{\varepsilon_{l_{0}}}{2}}^R$, and thus, since the columns of $[Q_{*,1},\dots, Q_{*,d-2}]$ are scalar multiples of $\zeta^R(B)$, by replacing the first $d-2$ columns of $Q$ by those of the matrix $\zeta^{R}(B)$ in relation \eqref{eq:Bdecomp}, we can assume that
$$[Q_{*,1},\dots, Q_{*,d-2}]=[\widetilde{\Pi}_{*,1}^{l_{0}}(\alpha),\dots, \widetilde{\Pi}_{*,d-2}^{l_{0}}(\alpha)],$$
for some $\alpha\in \widetilde{J}_{\varepsilon/2}$. To finish the proof, it suffices to show that $\hat{x}=\widetilde{\Pi}^{l_{0}}(\alpha)\Lambda(\beta)\widetilde{\Pi}^{l_{0}}(\alpha)^*$. By construction, 
$$\{Q_{*,1}=\widetilde{\Pi}_{*,1}^{l_{0}}(\alpha),\dots, Q_{*,d-2}=\widetilde{\Pi}_{*,d-2}^{l_{0}}(\alpha),\widetilde{\Pi}_{*,d-1}^{l_{0}}(\alpha),\widetilde{\Pi}_{*,d}^{l_{0}}(\alpha)\}$$ 
and 
$$\{Q_{*,1},\dots, Q_{*,d}\}$$ 
are orthonormal basis of $\C^{d}$, and thus, $\text{span}\{\widetilde{\Pi}_{*,d-1}^{l_{0}}(\alpha),\widetilde{\Pi}_{*,d}^{l_{0}}(\alpha)\}=\text{span}\{Q_{*,d-1},Q_{*,d}\}$. In particular, $\text{span}\{\widetilde{\Pi}_{*,d-1}^{l_{0}}(\alpha),\widetilde{\Pi}_{*,d}^{l_{0}}(\alpha)\}$ is contained in the eigenspace associated to $\Delta_{d-1,d-1}=\Delta_{d,d}$, which implies that $\widetilde{\Pi}_{*,d-1}^{l_{0}}(\alpha),\widetilde{\Pi}_{*,d}^{l_{0}}(\alpha)$ are orthonormal eigenvectors of $\hat{x}$ with eigenvalue $\Lambda_{d-1,d-1}(\beta)$. From here we conclude that 
$$\{\widetilde{\Pi}_{*,1}^{l_{0}}(\alpha),\dots, \widetilde{\Pi}_{*,d}^{l_{0}}(\alpha)\},$$
forms a base of eigenvectors for $\hat{x}$, hence implying that 
\begin{align*}
\hat{x}=\widetilde{\Pi}(\alpha)\Lambda(\beta)\widetilde{\Pi}(\alpha)^*,
\end{align*}
as required. The proof is now complete.
\end{proof}
%
 The following result gives sufficient conditions for points $x_{0}\in\Sc_{deg}$ to have a neighborhood diffeomorphic to $\R^{n_1(d)-2}$.
\begin{Proposition}\label{lem:geometricaux2}
Let $x_0\in \mathcal{S}_{deg}^d$ be such that $|\textbf{Sp}(\hat{x}_0)|=d-1$. Then there exists $\gamma>0$ such that $\Sc_{deg}^d\cap B_{\gamma}(x_0)$ is an $(n_1(d)-2)$-dimensional manifold.
\end{Proposition}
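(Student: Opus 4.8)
The plan is to realize $\Sc_{deg}^d$ locally near $x_0$ as the image of an explicit local chart, and then verify the two requirements of Definition \ref{eq:Fdiff}. Write $\hat{x}_0 = P D P^*$ with $P\in\Oc(d)$ and $D\in\Dc(d)$, and, since $|\textbf{Sp}(\hat{x}_0)| = d-1$, order things so that the repeated eigenvalue occupies positions $d-1,d$; thus $D=\Lambda(\beta_0)$ for some $\beta_0\in\R^{d-1}$ with the coordinates of $\beta_0$ pairwise distinct. First I would produce, exactly as in the proof of Proposition \ref{lem:geometricaux1}, a chart $\varphi:J_\varepsilon\to\Oc(d;2)\cap B_\gamma(R)$ covering $R:=[P_{*,1},\dots,P_{*,d-2}]$, together with the smooth completions $\psi_1(\alpha),\psi_2(\alpha)$ from \eqref{eq:psi1def}--\eqref{eq:psi2def}, assembled into $\Pi(\alpha)$ as in \eqref{eq:Pidef}. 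The candidate chart for $\Sc_{deg}^d$ near $x_0$ is then
\begin{align*}
G:J_{\varepsilon}\times W \longrightarrow \Sc_{deg}^d, \qquad G(\alpha,\beta) := \Pi(\alpha)\,\Lambda(\beta)\,\Pi(\alpha)^*,
\end{align*}
where $W\subset\R^{d-1}$ is a small neighborhood of $\beta_0$ on which the coordinates of $\beta$ remain pairwise distinct; note $\dim(J_\varepsilon\times W) = \big(\tfrac{d(d-1)}{2}-1\big) + (d-1) = n_1(d)-2$, and $G$ takes values in $\Sc_{deg}^d$ because $\Pi(\alpha)\in\Oc(d)$ and $\Lambda(\beta)$ has a repeated diagonal entry.

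Next I would check that $G$ is a homeomorphism onto $\Sc_{deg}^d\cap B_\gamma(x_0)$ for small $\gamma$. Injectivity and smoothness are straightforward: distinct $\alpha$ give distinct first $d-2$ columns of $\Pi(\alpha)$ (since $\varphi$ is a chart), and $\Lambda(\beta)$ is recovered from $G(\alpha,\beta)$ as its eigenvalue data, while $\beta$ is recovered from $\Lambda(\beta)$; continuity of $G^{-1}$ follows from continuity of $\Phi$ together with the fact that $\Pi$ recovers $\varphi$ on $J_{\varepsilon/2}$. Surjectivity onto $\Sc_{deg}^d\cap B_\gamma(x_0)$ is the crucial point and is exactly where the hypothesis $|\textbf{Sp}(\hat{x}_0)|=d-1$ is used: given $B\in\Sc_{deg}^d$ close to $\hat{x}_0$, Lemma \ref{lem:eigenprojections} furnishes a spectral decomposition $B=Q\Delta Q^*$ with $Q$ close to $P$ and $\Delta$ close to $D$, hence $[Q_{*,1},\dots,Q_{*,d-2}]$ lies in $\Oc(d;2)\cap B_\gamma(R) = \varphi(J_\varepsilon)$; running the argument at the end of the proof of Proposition \ref{lem:geometricaux1} (span of the last two columns is forced to be the repeated eigenspace) shows $B=\Pi(\alpha)\Lambda(\beta)\Pi(\alpha)^*$ for suitable $(\alpha,\beta)\in J_\varepsilon\times W$. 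Shrinking $\gamma$ guarantees the matching of neighborhoods $G(J_\varepsilon\times W) \supset \Sc_{deg}^d\cap B_\gamma(x_0)$.

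Finally I would show $DG_{(\alpha,\beta)}$ is injective for every $(\alpha,\beta)$, so that $G$ is a local chart in the sense of Definition \ref{eq:Fdiff} and hence, by covering each point of $\Sc_{deg}^d\cap B_\gamma(x_0)$ this way, the set is an $(n_1(d)-2)$-manifold. Differentiate $G$ separately in $\alpha$ and $\beta$: the $\beta$-derivative $\Pi(\alpha)\,\Lambda(\dot\beta)\,\Pi(\alpha)^*$ is nonzero for $\dot\beta\neq 0$ and lies in the span of the eigenprojections of $G(\alpha,\beta)$; the $\alpha$-derivative $\dot\Pi\,\Lambda(\beta)\,\Pi^* + \Pi\,\Lambda(\beta)\,\dot\Pi^*$ produces, after conjugating by $\Pi(\alpha)^*$, a matrix with zero diagonal blocks on the eigenspaces and off-diagonal part governed by $\Pi(\alpha)^*\dot\Pi(\alpha)$, which is injective in $\dot\alpha$ because $\varphi$ is an immersion and the spectral gaps $\beta_i\neq\beta_j$ are nonzero. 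Since these two contributions live in complementary subspaces of $\Sc(d)$ (diagonal-block versus off-diagonal-block with respect to the eigenspace decomposition), their sum vanishes only if both do, giving $(\dot\alpha,\dot\beta)=0$. The main obstacle is this rank computation for $DG$: one must carefully track how the immersion property of $\varphi$ on $\Oc(d;2)$, combined with the simplicity of all but one eigenvalue, forces the off-diagonal block of $\Pi^*\dot\Pi$ to depend injectively on $\dot\alpha$ — the degeneracy of the last eigenvalue is harmless precisely because $\Oc(d;2)$ only sees the first $d-2$ columns.
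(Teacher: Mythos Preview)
Your proposal is correct and follows essentially the same route as the paper's proof: the same chart $\Pi$ built from a local parametrization $\varphi$ of $\Oc(d;2)$ completed via the Gram--Schmidt vectors $\psi_1,\psi_2$, the same map $F(\alpha,\beta)=\Pi(\alpha)\Lambda(\beta)\Pi(\alpha)^*$, surjectivity onto a neighborhood in $\Sc_{deg}^d$ via Lemma~\ref{lem:eigenprojections}, and then injectivity of the derivative. The only differences are presentational --- the paper proves injectivity of $DF$ by differentiating the eigenvector equation $M(t)v^i(t)=\mu_i(t)v^i(t)$ along a curve, whereas you conjugate by $\Pi(\alpha)^*$ and read off the diagonal/off-diagonal block structure of $[\Pi^*\dot\Pi,\Lambda]+\Lambda(\dot\beta)$ (an equivalent and arguably cleaner computation) --- and the paper supplies a careful compactness/subsequence argument for the continuity of $F^{-1}$ in the $\alpha$-variable, where your sketch is brief.
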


\begin{proof} 
The ideas of the proof are similar to those used in Proposition \ref{lem:geometricaux1}, but in this case, the compactness argument that leads to \eqref{eq:Vinclusion}, is replaced by a localization argument for the matrix of eigenvectors of $\hat{x}_{0}$.

 Let $P\in\mathcal{O}(d)$ and $D\in\mathcal{D}(d)$ be such that 
\begin{align*}
\hat{x}_{0}
  &=PDP^*.
\end{align*}
Since $|\textbf{Sp}(\hat{x}_0)|=d-1$, only one of the eigenvalues $D_{1,1},\dots, D_{d,d}$ of $\hat{x}_{0}$ is repeated. We will assume without loss of generality that $D_{d-1,d-1}=D_{d,d}$. Define $J_{\varepsilon}$, for $\varepsilon>0$, by $J_{\varepsilon}:=(-\varepsilon,\varepsilon)^{\frac{d(d-1)}{2}-1}$, and let $R\in \mathcal{O}(d;2)$ be the matrix $R=\{R_{i,j} ; 1\leq i\leq d,\ \  1\leq j\leq d-2$\}, with $R_{i,j}=P_{i,j}$ for all $1\leq i\leq d$ and $1\leq j\leq d-2$. Since $\mathcal{O}(d;2)$ is a manifold of dimension $\frac{ d(d-1)}2-1$, there exists $\gamma>0$ and a smooth diffeomorphism 
\begin{align*}
\varphi:J_{\varepsilon}\rightarrow \mathcal{O}(d;2)\cap B_{\gamma}(R),
\end{align*}
with $\varphi(0)=R$. Denote by $\varphi_{*,j}$ the $j$-th column vector of $\varphi$. Proceeding as in the proof of Proposition \ref{lem:geometricaux1}, we can show that if $\gamma$ is sufficiently small,
the functions $\psi_1$ and $\psi_2$ defined  in \eqref{eq:psi1def} and \eqref{eq:psi2def}
are smooth. Define $\Pi:J_{\varepsilon}\rightarrow\Oc(d)$ by
\begin{align*}
\Pi_{*,j}(\alpha)
  &:=\left\{\begin{array}{ll}\varphi_{*,j}(\alpha)& \text{ if } 1\leq j\leq d-2\\\psi_{1}(\alpha)& \text{ if } j=d-1\\\psi_{2}(\alpha)& \text{ if } j=d,\end{array}\right.
\end{align*}
and $F:J_{\varepsilon}\times\R^{d-1}\rightarrow\Sc_{deg}^d$ by
\begin{align*}
F(\alpha,\beta):=\Pi(\alpha)\Lambda(\beta)\Pi(\alpha)^*.
\end{align*}

In order to show that $\Sc_{deg}^d\cap B_{\gamma}(x_0)$ is an $(n_1(d)-2)$-dimensional manifold, we will prove that there exist open subsets $U\subset J_{\varepsilon}$ and $V\subset \mathcal{S}_{deg}^d\cap B_{\gamma}(\hat{x}_0)$, such that the mapping 
\begin{align}\label{eq:diffeomorphism}
\begin{array}{lll}U\times\R^{d-1}&\rightarrow&V\\ (\alpha,\beta)&\longmapsto&F(\alpha,\beta)\end{array}
\end{align}
is a diffeomorphism. To this end, define
\begin{align}\label{eq:rdef}
r:=\frac{1}{2}\min_{\substack{\mu,\nu\in\textbf{Sp}(\hat{x}_0)\\\mu\neq\nu}}|\mu-\nu|.
\end{align}
Notice that by Lemma \ref{lem:eigenprojections}, there exists $\delta>0$ satisfying that for all $x\in\mathcal{S}_{deg}^d\cap B_{\delta}(\hat{x}_0)$, there exist $Q\in\mathcal{O}(d)$ and $\Delta\in\mathcal{D}(d)$, such that $\hat{x}=Q\Delta Q^*$, 
\begin{align}\label{eq:QclosetoP}
Q\in \mathcal{O}(d)\cap B_{\gamma/2}(P),
\end{align}
and 
\begin{align}\label{eq:Deltaradiusr}
\Delta\in \mathcal{D}(d)\cap B_{r}(D).
\end{align}
By \eqref{eq:QclosetoP}, there exists $\alpha\in J_{\varepsilon}$ such that $\varphi(\alpha)=[Q_{*,1},\dots, Q_{*,d-2}]$. As a consequence, since 
$$\{\Pi_{*,1}(\alpha),\dots,\Pi_{*,d}(\alpha)\}\ \ \ \ \ \ \ \text{and}\ \ \ \ \ \ \ \{Q_{*,1},\dots, Q_{*,d}\}$$ 
are orthonormal bases of $\R^{d}$ satisfying 
$$\{\Pi_{*,1}(\alpha),\dots, \Pi_{*,d-2}(\alpha)\}=\{Q_{*,1},\dots, Q_{*,d-2}\},$$ 
we have that $\text{span}\{\Pi_{*,d-1}(\alpha),\Pi_{*,d}(\alpha)\}=\text{span}\{Q_{*,d-1},Q_{*,d}\}$. On the other hand, by \eqref{eq:Deltaradiusr}, we have that $\Delta_{1,1}<\cdots<\Delta_{d-1,d-1}=\Delta_{d,d}$, and thus,  we conclude that $\Pi_{*,d-1}(\alpha),\Pi_{*,d}(\alpha)$ are eigenvectors of $\hat{x}$ with eigenvalue $\Delta_{d-1,d-1}$, hence implying that  
$$\{\Pi_{*,1}(\alpha),\dots,\Pi_{*,d}(\alpha)\}$$
is a basis of eigenvectors for $\hat{x}$ and
\begin{align*}
\hat{x}=\Pi(\alpha)\Lambda(\beta)\Pi(\alpha)^*.
\end{align*}
From here it follows that if $U\subset\R^{n_{1}(d)-2}$ and $V\subset\mathcal{S}_{deg}^d$ are given by  $V:=B_{\delta}(\hat{x}_0)$ and $U:=F^{-1}(V)$, the mapping \eqref{eq:diffeomorphism} is onto. Therefore,  in order to show that  the mapping $F$ defined in \eqref{eq:diffeomorphism} is a diffeomorphism, it suffices to show that the following conditions hold:   
\begin{enumerate}
\item[(i)]     The restriction of $F$ to $U$ is injective,
\item[(ii)]    The function $F^{-1}$ is continuous over $V$, 
\item[(iii)]   $D_{p}F$ is injective for every $p\in J_{\varepsilon}\times\R^{d-1}$.
\end{enumerate}
Notice that condition (iii) implies that $F$ is locally injective, which gives condition (i) for $\delta>0$ sufficiently small. Hence, it suffices to show that $F^{-1}$ is continuous and $D_{p}F$ is injective for every $p\in J_{\varepsilon}\times\R^{d-1}$. We split the proof of these claims into the following  two steps:

\medskip
\noindent {\it Step 1}.
First we show that $F^{-1}$ is continuous. Consider a sequence $\{y_{n}\}_{n\geq 1}\subset \Sc_{deg}^d\cap B_{\delta}(\hat{x}_{0})$ such that  $\lim_{n}y_{n}=y$ for some $y\in \Sc_{deg}^d\cap B_{\delta}(\hat{x}_{0})$. Consider the elements $(\alpha_{n},\beta_n),(\alpha,\beta)\in J_{\varepsilon}\times \R^{d-1}$, defined by $(\alpha_{n},\beta_n)=F^{-1}(y_{n})$ and  $(\alpha,\beta):=F^{-1}(y)$,  that satisfy
\[
 y_{n}
  =\Pi(\alpha_n)\Lambda(\beta_{n})\Pi(\alpha_n) \]
  and
  \begin{equation}\label{eq:piFalphabeta}
   y=\Pi(\alpha)\Lambda(\beta)\Pi(\alpha).
  \end{equation}
Our aim is to show that $\lim_{n}\alpha_{n}=\alpha$ and $\lim_{n}\beta_{n}=\beta$. Condition $\lim_{n}\beta_{n}=\beta$ follows from the continuity of $\Phi_1,\dots, \Phi_{d}$. To show that 
\begin{align}\label{eq:limalphasubseq}
\lim_{n}\alpha_n=\alpha,
\end{align}
we proceed as follows. By construction, for all $n\in\N$, $\Pi(\alpha_{n})\in \mathcal{O}(d)\cap B_{\gamma/2}(P)$, and thus $\varphi(\alpha_n)\in \mathcal{O}(d;2)\cap B_{\gamma/2}(R)$. As a consequence, the sequence $\{\alpha_n\}_{ n\geq 1}$ is contained in the compact set $K:=\varphi^{-1}(\mathcal{O}(d;2)\cap \overline{B_{\gamma/2}(R)})$. Therefore, it suffices to show that every convergent subsequence  $\{\alpha_{m_n}\}_{n\geq 1}\subset\{\alpha_{n}\}_{n\geq 1}$, satisfies $\lim_{n}\alpha_{m_n}=\alpha$.

 By taking limit as $n\rightarrow\infty$ in the relation $y_{m_{n}}=\Pi(\alpha_{m_n})\Lambda(\beta_{m_n})\Pi(\alpha_{m_n})^*$, we get
\begin{align}\label{eq:yFalphabeta}
y
  &=\Pi(\lim_{n}\alpha_{m_n})\Lambda(\beta)\Pi(\lim_{n}\alpha_{m_n})^*.
\end{align}
Assume that $\Lambda(\beta)=(\mu_1,\dots, \mu_d)$ for some $\mu_1,\dots, \mu_d$ such that $\mu_{d-1}=\mu_d$. Since $K\subset  J_{\varepsilon}$, then $\lim_{n}\alpha_{m_n}$ belongs to the domain of $\Pi$. Moreover, by \eqref{eq:yFalphabeta}, we have that 
\begin{align}\label{eq:PilimitinEmu}
\Pi_{*,j}(\lim_n\alpha_{m_n})\in \textbf{E}_{\mu_{j}}^{\hat{y}}\ \ \ \ \ \  \text{for all }\ \ 1\leq j\leq d-2.
\end{align}
On the other hand, since $\Lambda(\beta)\in B_{r}(D)$, we have that $\mu_1>\mu_2>\dots>\mu_{d-1}$, and consequently, $\textbf{E}_{\mu_{j}}^{\hat{y}}$ is one-dimensional for $1\leq j\leq d-2$. Therefore, using \eqref{eq:PilimitinEmu} as well as the fact that $|\Pi_{*,j}(\lim_{n}\alpha_{m_n})|=1$ for all $1\leq j\leq d$, it follows that
\begin{align}\label{eq:Piliminvi}
\Pi_{*,j}(\lim_{n}\alpha_{m_n})\in \{\Pi_{*,j}(\alpha),-\Pi_{*,j}(\alpha)\},
\end{align}
for all $1\leq j\leq d-2.$ Since the image of $\Pi_{*,j}$ is contained in $B_{\frac{1}{2}}(\Pi_{*,j}(\alpha))$, we conclude that $\Pi_{*,j}(\lim_{n}\alpha_{m_n})=\Pi_{*,j}(\alpha)$, which implies that $\varphi(\lim_{n}\alpha_{m_n})=\varphi(\alpha)$.  Therefore, using the fact that $\varphi$ is a diffeomorphism, we conclude that $\lim_{n}\alpha_{m_{n}}=\alpha$, as required. 

\medskip
\noindent \textit{Step 2}.
Next we prove that $DF_{p}$ is injective for all $p\in J_{\varepsilon}$. Consider an element $(a,b)\in \R^{\frac{d(d-1)}{2}-1}\times \R^{d-1}$ satisfying $DF_{\hat{x}_0}(a,b)=0$. Then, for $\varepsilon>0$ sufficiently small, the curve $M:(-\varepsilon,\varepsilon)\rightarrow \Sc_{deg}\cap B_{\delta}(\hat{x}_0)$ given by $M(t):=F(ta,tb)$, satisfies $M(0)=\hat{x}_0$ and $\dot{M}(0)=DF_{\hat{x}_0}(a,b)=0$. Denote by $v^{1}(t),\dots,v^{d}(t)$ the columns of $\Pi(ta)$ and define $\mu_{i}(t):=\Lambda_{i,i}(tb)$. Then, we have 
\begin{align}\label{eq:dyneigen}
M(t)v^i(t)
  &=\mu_{i}(t)v^i(t).
\end{align}
By taking derivative with respect to $t$ in \eqref{eq:dyneigen}, we get 
\begin{align*}
\dot M(t) v^i(t) +M(t) \dot v^i (t)
  &=\dot \mu_{i}(t)v^i(t) +\mu_{i}(t) \dot v^i(t),\ \ \ \ \ \ \ \ \ \text{for all}\ 1\leq i\leq d,
\end{align*}
which, by the condition $\dot{M}(0)=0$, implies that
\begin{align}\label{eq:Hadamar}
M(0) \dot v^i (0)
  &=\dot \mu_{i}(0) v^i(0) +\mu_{i}(0) \dot v^i(0),\ \ \ \ \ \ \ \ \ \text{for all}\ 1\leq i\leq d.
\end{align}
By taking the  inner product with $v^{j}(0)$ in \eqref{eq:Hadamar}, for $j\neq i$, we get
\begin{align*}
\Ip{v^{j}(0),\dot v^{i}(0)}(\mu_{j}(0)-\mu_{i}(0))=0.
\end{align*}
In particular, since $\mu_{d-1}=\mu_d$ is the only repeated eigenvalue for $\hat{x}_{0}$,
we deduce that for $1\leq i,j\leq d-1$ satisfying $i\neq j$, 
\begin{align}\label{eq:vdotivdotj}
\Ip{v^{j}(0),\dot v^{i}(0)}=0.
\end{align}
On the other hand, the condition $\Norm{v^{i}(t)}^2=1$ implies that 
\begin{align}\label{eq:vdotv}
\Ip{\dot{v}^{i}(0),v^{i}(0)}=0, 
\end{align}
which by \eqref{eq:vdotivdotj} leads to $\dot{v}^{i}(0)=0$ for all $1\leq i\leq d-1$. Since the last two columns of $\Pi$ are smooth functions of the first $d-2$ (see equations \eqref{eq:psi1def} and \eqref{eq:psi2def}), from the equations $\dot{v}^{1}(0)=\dots=\dot{v}^{d-1}(0)=0$, we conclude that $\frac{d}{dt}\Pi(ta)\big|_{t=0}=0$. On the  other hand, since $\Pi$ is a local chart for the manifold $\Oc(d;2)$ around $\Pi(0)$, the derivative $\dot{\Pi}(0)$ is injective, and thus the equation $\frac{d}{dt}\Pi(ta)\big|_{t=0}=0$ implies that $a=0$.

 Finally we prove that $b=0$. By definition, $M(t)=\Pi(\alpha t)\Lambda(\beta t)\Pi(\alpha t)^*$, and hence 
\begin{align*}
\dot{M}(t)
  &=\big(\frac{d}{dt}\Pi(\alpha t)\big)\Lambda(\beta t)\Pi(\alpha t)^*
	+\Pi(\alpha t)\frac{d}{dt}\Lambda(\beta t)\Pi(\alpha t)^*
	+\Pi(\alpha t)\Lambda(\beta t)\big(\frac{d}{dt}\Pi(\alpha t)\big).
\end{align*}
Since $a=0$, by evaluating the previous identity at $t=0$, we get 
 \begin{align*}
0
  &=\Pi(0)(\dot{\Lambda}(0)\beta)\Pi(0)^*,
\end{align*}
which implies that $b=0$. From here we conclude that the only solution to $DF_{x_{0}}(a,b)=0$ is $(a,b)=0$. This finishes the proof of the injectivity for $DF_{x_{0}}$.
The proof is now complete.
\end{proof}
The next result gives a sufficient condition for points $x_{0}\in\Hc_{deg}$ to have a neighborhood diffeomorphic to $\R^{n_{2}(d)-3}$.
\begin{Proposition}\label{lem:geometricaux2GUE}
Let $x_0\in \mathcal{H}_{deg}$ be such that $|\textbf{Sp}(\hat{x}_0)|=d-1$. Then, there exists $\gamma>0$, such that $\Hc_{deg}^d\cap B_{\gamma}(x_0)$ is an $(n_2(d)-3)$-dimensional manifold.
\end{Proposition}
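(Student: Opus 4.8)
The plan is to mirror the proof of Proposition \ref{lem:geometricaux2}, replacing everywhere the real orthogonal group $\Oc(d)$ and the Stiefel-type manifold $\Oc(d;2)$ by their complex analogues $\mathcal{U}(d)$ and $\mathcal{U}(d;2)$, and using the complex eigenprojection result (Lemma \ref{lem:eigenprojectionsu}) in place of Lemma \ref{lem:eigenprojections}. The one genuinely new ingredient is that in the complex setting the columns of a unitary matrix are only determined up to a \emph{phase} rather than a sign, so the local parametrization of the eigenvector matrix must be taken inside the phase-normalized manifold $\mathcal{V}_{\gamma}^{R}$ of \eqref{eq:Vtilde} (which has the correct dimension $d^2-d-2$), rather than inside all of $\mathcal{U}(d;2)$.

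First I would fix $P\in\mathcal{U}(d)$ and $D\in\mathcal{D}(d)$ with $\hat{x}_0=PDP^*$, assume without loss of generality $D_{d-1,d-1}=D_{d,d}$ (the unique repeated eigenvalue, since $|\textbf{Sp}(\hat x_0)|=d-1$), and set $R:=[P_{*,1},\dots,P_{*,d-2}]\in\mathcal{U}(d;2)$. Using the preceding lemma, for $\gamma$ small the set $\mathcal{V}_{\gamma}^{R}$ is a $(d^2-d-2)$-dimensional submanifold, so there is a smooth diffeomorphism $\widetilde{\varphi}:\widetilde J_{\varepsilon}\to\mathcal{V}_{\gamma}^{R}$ with $\widetilde{\varphi}(0)=R$, where $\widetilde J_{\varepsilon}:=(-\varepsilon,\varepsilon)^{d^2-d-2}$. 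As in Proposition \ref{propaux1a}, for $\gamma$ small the Gram–Schmidt-type maps $\widetilde\psi_1,\widetilde\psi_2$ (formulas \eqref{eq:psi1def}–\eqref{eq:psi2def} with $\varphi$ replaced by $\widetilde\varphi$) are smooth, so one gets a smooth $\widetilde\Pi:\widetilde J_{\varepsilon}\to\mathcal{U}(d)$ with columns $\widetilde\varphi_{*,1},\dots,\widetilde\varphi_{*,d-2},\widetilde\psi_1,\widetilde\psi_2$, and hence a smooth $\widetilde F(\alpha,\beta):=\widetilde\Pi(\alpha)\Lambda(\beta)\widetilde\Pi(\alpha)^*$ from $\widetilde J_{\varepsilon}\times\R^{d-1}$ into $\Hc_{deg}^d$. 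The claim is that restricted to suitable open sets $U\times\R^{d-1}$ and $V:=B_{\delta}(\hat x_0)$, this map is a diffeomorphism, which by Definition \ref{eq:Fdiff} exhibits $\Hc_{deg}^d\cap B_{\gamma}(x_0)$ as a manifold of dimension $(d^2-d-2)+(d-1)=d^2-3=n_2(d)-3$.

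Surjectivity onto $V$: by Lemma \ref{lem:eigenprojectionsu}, shrinking $\delta$ we may write any $\hat x\in\Hc_{deg}^d\cap B_{\delta}(\hat x_0)$ as $Q\Delta Q^*$ with $Q$ close to $P$ and $\Delta$ close to $D$; after multiplying the first $d-2$ columns of $Q$ by appropriate unit-modulus phases we land in $\mathcal{V}_{\gamma}^{R}$, so $[Q_{*,1},\dots,Q_{*,d-2}]=\widetilde\varphi(\alpha)$ for some $\alpha$; then the spectral gap (from $\Delta$ near $D$) forces $\widetilde\Pi_{*,1}(\alpha),\dots,\widetilde\Pi_{*,d-2}(\alpha)$ to be the unit eigenvectors of $\hat x$ for the simple eigenvalues, whence $\mathrm{span}\{\widetilde\Pi_{*,d-1}(\alpha),\widetilde\Pi_{*,d}(\alpha)\}$ is the repeated eigenspace and $\hat x=\widetilde\Pi(\alpha)\Lambda(\beta)\widetilde\Pi(\alpha)^*$. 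Continuity of $\widetilde F^{-1}$: one copies Step 1 of Proposition \ref{lem:geometricaux2}, the only change being that \eqref{eq:Piliminvi} becomes ``$\widetilde\Pi_{*,j}(\lim_n\alpha_{m_n})$ is a unit-modulus multiple of $\widetilde\Pi_{*,j}(\alpha)$'', and then the phase-normalization defining $\mathcal{V}_{\gamma}^{R}$ (namely $\langle \widetilde\Pi_{*,j}(\alpha),R_{*,j}\rangle>0$) together with the smallness $\widetilde\varphi(\widetilde J_{\varepsilon})\subset B_{1/2}(R_{*,j})$ pins the phase to $1$, forcing $\widetilde\varphi(\lim_n\alpha_{m_n})=\widetilde\varphi(\alpha)$ and hence $\lim_n\alpha_{m_n}=\alpha$. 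Injectivity of $D\widetilde F_p$: I would repeat Step 2 verbatim with real inner products replaced by Hermitian ones; differentiating $M(t)v^i(t)=\mu_i(t)v^i(t)$ at a zero of $\dot M(0)$ and pairing with $v^j(0)$ gives $\langle v^j(0),\dot v^i(0)\rangle=0$ for $i\neq j$ among the simple eigenvalues, while $\|v^i(t)\|\equiv1$ gives $\mathrm{Re}\langle\dot v^i(0),v^i(0)\rangle=0$; the extra real degree of freedom — the imaginary part of $\langle\dot v^i(0),v^i(0)\rangle$, i.e. an infinitesimal phase — is killed precisely by the phase-normalization constraint defining $\mathcal{V}_{\gamma}^{R}$ (differentiating $\langle v^i(t),R_{*,i}\rangle\in\R_{>0}$), so $\dot v^i(0)=0$ for $1\le i\le d-2$, hence $\tfrac{d}{dt}\widetilde\Pi(ta)|_{0}=0$, hence $a=0$ since $\widetilde\varphi$ is an immersion; then expanding $\dot M(0)=0$ with $a=0$ yields $\widetilde\Pi(0)(\dot\Lambda(0)b)\widetilde\Pi(0)^*=0$, so $b=0$.

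The main obstacle — and the reason one cannot simply cite Proposition \ref{lem:geometricaux2} as a black box — is this phase ambiguity: in $\mathcal{U}(d)$ the eigenvectors for simple eigenvalues are determined only up to $U(1)$, so the ``naive'' analogue of the map $\widetilde F$ from $\mathcal{U}(d;2)\times\R^{d-1}$ would have a $(d-2)$-dimensional kernel in its differential coming from rotating phases, and would fail to be locally injective. Restricting the parametrization to the phase-normalized slice $\mathcal{V}_{\gamma}^{R}$ is exactly what removes these spurious directions and produces the correct dimension count $n_2(d)-3$ (versus $n_1(d)-2$ in the real case); I would make sure that every place where the real argument used ``sign'' is replaced by ``unit-modulus scalar'' and that the constraint $\langle A_{*,j},R_{*,j}\rangle=|\langle A_{*,j},R_{*,j}\rangle|$ is invoked to eliminate the phase at both the topological step (continuity of the inverse) and the infinitesimal step (injectivity of the differential). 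The remaining verifications are routine translations of the real case.
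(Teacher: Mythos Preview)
Your proposal is correct and follows essentially the same approach as the paper's own proof: both parametrize via the phase-normalized slice $\mathcal{V}_{\gamma}^{R}$, use Lemma~\ref{lem:eigenprojectionsu} together with a column-by-column phase adjustment for surjectivity, resolve the unit-modulus ambiguity (in place of the sign ambiguity of \eqref{eq:Piliminvi}) to get continuity of the inverse, and kill the imaginary part of $\langle \dot v^{i}(0),v^{i}(0)\rangle$ by differentiating the reality constraint $\langle v^{i}(t),R_{*,i}\rangle\in\R$ to obtain injectivity of the differential. Your identification of the phase ambiguity as the sole new obstacle, and of $\mathcal{V}_{\gamma}^{R}$ as the device that removes it at both the topological and infinitesimal levels, matches the paper exactly.
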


\begin{proof}
\noindent Let $P\in\mathcal{H}(d)$ and $D\in\mathcal{D}(d)$ be such that 
\begin{align*}
\hat{x}_{0}
  &=PDP^*.
\end{align*}
Since $|\textbf{Sp}(\hat{x}_0)|=d-1$, only one of the eigenvalues $D_{1,1},\dots, D_{d,d}$ of $\hat{x}_{0}$ is repeated. We will assume without loss of generality that $D_{d-1,d-1}=D_{d,d}$. Define $\widetilde{J}_{\varepsilon}$, for $\varepsilon>0$, by $\widetilde{J}_{\varepsilon}:=(-\varepsilon,\varepsilon)^{d^2-d-2}$, and let $R\in \mathcal{U}(d;2)$ be the matrix $R=\{R_{i,j} ; 1\leq i\leq d,\ \  1\leq j\leq d-2$\}, with $R_{i,j}=P_{i,j}$ for all $1\leq i\leq d$ and $1\leq j\leq d-2$. Using the fact that for $\nu>0$ sufficiently small the set $\mathcal{V}_\nu^{R}$ given by \eqref{eq:Vtilde} is a manifold, we deduce that there exist $\varepsilon,\gamma>0$ and a diffeomorphism 
\begin{align*}
\widetilde{\varphi}:\widetilde{J}_{\varepsilon}\rightarrow \mathcal{V}_{\gamma}^R,
\end{align*}
such that $\widetilde{\varphi}(0)=R.$ As in the proof of Proposition \ref{lem:geometricaux2}, we can construct a smooth function $\Pi:\widetilde{J}_{\varepsilon}\rightarrow \mathcal{U}(d)$ with entries $\Pi_{i,j}$, such that $\Pi_{i,j}(\alpha)=\widetilde{\varphi}_{i,j}(\alpha)$ for all $\alpha\in\widetilde{J}_{\varepsilon}$ and $1\leq i\leq d$ and $1\leq j\leq d-2$.

 Define $\widetilde{F}:\widetilde{J}_{\varepsilon}\times\R^{d-1}\rightarrow\mathcal{H}_{deg}^d$ by
\begin{align*}
\widetilde{F}(\alpha,\beta):=\widetilde{\Pi}(\alpha)\Lambda(\beta)\widetilde{\Pi}(\alpha)^*.
\end{align*}
By Lemma \ref{lem:eigenprojectionsu}, there exists $\delta>0$ such that for all $x\in\mathcal{H}_{deg}^d\cap B_{\delta}(\hat{x}_0)$, there exist $Q\in\mathcal{U}(d)$ and $\Delta\in\mathcal{D}(d)$, satisfying 
\begin{align}\label{eq:hatxQDQ}
\hat{x}=Q\Delta Q^*,
\end{align} 
as well as
\begin{align*}
Q\in \mathcal{U}(d)\cap B_{\gamma/2}(P)\ \ \ \text{ and }\ \ \ \Delta\in \mathcal{D}(d)\cap B_{r}(\Delta),
\end{align*}
where $r$ is given by \eqref{eq:rdef}. Notice that relation \eqref{eq:hatxQDQ} still holds if we multiply the $j$-th column of $Q$, for $1\leq j\leq d-2$, by  $\langle P_{*,j},R_{*,j}\rangle/|\langle P_{*,j},R_{*,j}\rangle|$, so we can assume without loss of generality that 
$[Q_{*,1},\dots, Q_{*,d-2}]\in \mathcal{V}_{\gamma}^{R}.$ In particular, there exists $\alpha\in\widetilde{J}_{\varepsilon}$ such that $\widetilde{\varphi}(\alpha)=[Q_{*,1},\dots, Q_{*,d-2}]$. Then, by proceeding as in the proof of Proposition \ref{lem:geometricaux2}, we can show that 
$$\hat{x}=\widetilde{\Pi}(\alpha)\Lambda(\beta)\widetilde{\Pi}(\alpha)^*$$
for some $\beta\in\R^{d-1}$. As a consequence, if we define $\widetilde{V}:=B_{\delta}(\hat{x}_{0})$ and $\widetilde{U}:=F^{-1}(\widetilde{V})$, then the mapping 
\begin{align}\label{eq:diffeoFtilde}
\begin{array}{lll}\widetilde{U}\times\R^{d-1}&\rightarrow&\widetilde{V}\\ (\alpha,\beta)&\longmapsto&\widetilde{F}(\alpha,\beta)\end{array}
\end{align}
is onto. As in the proof of Proposition \ref{lem:geometricaux2}, provided that we show the conditions 
\begin{enumerate}
\item[(ii)] $\widetilde{F}^{-1}$ is continuous over $\tilde{U}$
\item[(iii)] $D\widetilde{F}_{p}$ is injective for every $\tilde{J}_{\varepsilon}$,
\end{enumerate}
then the mapping \eqref{eq:diffeoFtilde} is a diffeomorphism. The proof of the continuity of $\widetilde{F}^{-1}$ follows ideas similar to those from the GOE case. The only argument that needs to be modified is the proof of \eqref{eq:limalphasubseq}, since 
equation \eqref{eq:Piliminvi} is not necessarily true when $\beta=2$. To fix this problem, we replace equation \eqref{eq:Piliminvi} by
\begin{align*}
\widetilde{\Pi}_{*,i}(\lim_{n}\alpha_{m_n})=\eta \widetilde{\Pi}_{*,i}(\alpha), \ \ \ \ \text{for }\ \ 1\leq i\leq d-2,
\end{align*}	
which holds for some $\eta\in\C$ with $|\eta|=1$. Then, by using the fact that $[\Pi_{*,1}(\alpha),\dots, \Pi_{*,d-2}(\alpha)]$ belongs to $\mathcal{V}_{\gamma}^R$, we conclude that $\widetilde{\Pi}(\lim_{n}\alpha_{m_n})=\widetilde{\Pi}(\alpha)$, which in turn implies that $\varphi(\lim_{n}\alpha_{m_n})=\varphi(\alpha)$. Then, since $\varphi$ is a diffeomorphism we conclude that $\lim_{n}\alpha_{m_n}=\alpha$, as required.

The proof of the injectivity of $DF_{p}$ , for $p\in\widetilde{J}_{\varepsilon}$, follows the same arguments as in the GOE case, with the exception that identity \eqref{eq:vdotv} must be replaced by 
\begin{align}\label{eq:vdotvreal}
{\rm Re}(\Ip{\dot{v}^{i}(t),v^{i}(t)})=0.
\end{align}
Then, since $\Ip{v^{i}(t),v^{i}(0)}=|\Ip{v^{i}(t),v^{i}(0)}|$, we conclude that $\Ip{v^{i}(t),v^{i}(0)}_{\C^d}$ is real. This relation can be combined with \eqref{eq:vdotvreal}, in order to get \eqref{eq:vdotv}. The rest of the proof is analogous  to  Proposition \ref{lem:geometricaux2}.
\end{proof}

\section{Proof of the main results}\label{sec:proofs}
 This section is devoted to the proofs of Theorem \ref{thm:main} and Corollary \ref{cor:main}.

\begin{proof}[Proof of Theorem \ref{thm:main}]
The cases $\beta=1$ and $\beta=2$ can be handled similarly, so it suffices to prove the result for $\beta=1$. First suppose that $Q<2$. By Proposition \ref{lem:geometricaux1}, there exists an infinitely differentiable mapping $F:\R^{n_{1}(d)-2}\rightarrow\Sc(d)$, such that 
$\Sc_{deg}^{d}-A^1\subset {\rm Im}(F)$. As a consequence,  
\begin{multline}\label{eq:Pbcollisionupper}
\Pb\left[\lambda_{i}^{1}(t)=\lambda_{j}^{1}(t)\ \ \text{for some }\ t\in I\ \text{and}\ 1\leq i<j\leq d\right]\\
\begin{aligned}
  &=\Pb\left[X^1(t)\in \Sc_{deg}^d-A^1\ \ \text{for some }\ t\in I\right]\\
	&\leq\Pb\left[X^1(t)\in {\rm Im}(F)\ \ \text{for some }\ t\in I\right].
\end{aligned}
\end{multline}
Since the smooth mapping $F$ is defined over $\R^{n_1(d)-2}$, it follows that the set ${\rm Im}(F)$ has Hausdorff dimension at most $n_1(d)-2$. Thus, since $Q<2$, by Corollary \ref{cor:BiLaXi}, 
$$\Pb\left[X^1(t)\in {\rm Im}(F)\ \ \text{for some }\ t\in I\right]=0.$$ 
Therefore, by \eqref{eq:Pbcollisionupper} we get that 
\begin{align*}
\Pb\left[\lambda_{i}^1(t)=\lambda_{j}^1(t)\ \ \text{for some }\ t\in I\ \text{and}\ 1\leq i<j\leq n\right]
  =0,
\end{align*}
as required. To prove \eqref{eq:main100} in the case $Q>2$, choose any $x_{0}\in\Sc_{deg}^d$ satisfying $|\textbf{Sp}(\hat{x}_0)|=d-1$. By Lemma \ref{lem:geometricaux2}, there exists $\delta>0$, such that $\Sc_{deg}^d\cap B_{\delta}(x_{0})$ is an $n_1(d)$-dimensional manifold. In particular, the Hausdorff dimension of $\Sc_{deg}^d$ is at least $n_1(d)-2$.  The Hausdorff dimension of the shifted manifold $\Sc_{deg}^d-A^2$ is also larger than or equal to $n_1(d)-2$. Relation \eqref{eq:main100} then follows by Corollary \ref{cor:BiLaXi}. This finishes the proof of Theorem \ref{thm:main}.
\end{proof}

\begin{proof}[Proof of Corollary \ref{cor:main}]
The cases $\beta=1$ and $\beta=2$ can be handled similarly, so we will assume without loss of generality that $\beta=1$. 
Suppose that the process $\xi$ is a one dimensional fractional Brownian motion of Hurst parameter $0<H<1$, with $H\neq\frac{1}{2}$. If $H>\frac{1}{2}$, relation \eqref{eq:main10} follows from equation \eqref{eq:main100i} in Theorem \ref{thm:main}. Moreover, if $H<\frac{1}{2}$, then relation \eqref{eq:main1} follows from equation \eqref{eq:main100}. Therefore, it suffices to show relation \eqref{eq:fbmprobone} in the case where $H<\frac{1}{2}$ and $A^1\in \Sc_{deg}^d$ satisfies either $|\textbf{Sp}(A^1)|=d-1$ or $A^1=0$. The proof of this fact  will be done in two steps: first we prove that the probability of instant collision is strictly positive, and then we prove that such probability is one.

\medskip
\noindent
{\it Step 1.}  Our first goal is to prove that there exists $\delta'>0$ such that for any $0<T<1$,
\begin{align}\label{eq:Pbhittingprobfinalprev}
\Pb\left[\lambda^1_{i}(t)=\lambda^1_{j}(t)\ \ \text{for some }\ t\in (0,T]\ \text{and}\ 1\leq i<j\leq n\right]
  &\geq\delta^{\prime}>0.
  \end{align}
  We will split the proof of  (\ref{eq:Pbhittingprobfinalprev}) into  the cases $A^1=0$ and $|\textbf{Sp}(A^1)|=d-1$. 

\medskip
\noindent
{\it (i)}  Suppose $|\textbf{Sp}(A^1)|=d-1$. Then $A^1$ has exactly one repeated eigenvalue. We will assume without loss of generality that $\Phi_{d-1}(A^1)=\Phi_{d}(A^1)$. Fix $T<1$.  By  the self-similarity of $X^{1}(t)$, we can write
\begin{multline}\label{eq:hittingpselfs}
\Pb\left[\lambda^1_{i}(t)=\lambda^1_{j}(t)\ \ \text{for some }\ t\in (0,T]\ \text{and}\ 1\leq i<j\leq n\right]\\
\begin{aligned}
  &=\Pb\left[X^{1}(t)\in (\Sc_{deg}^d-A^1)\ \ \text{for some }\ t\in (0,T]\ \text{and}\ 1\leq i<j\leq n\right]\\
	&=\Pb\left[X^{1}(s)\in T^{-H}(\Sc_{deg}^d-A^1)\ \ \text{for some }\ s\in (0,1]\ \text{and}\ 1\leq i<j\leq n\right]\\
	&\geq\Pb\left[X^{1}(s)\in T^{-H}(\Sc_{deg}^d-A^1)\ \ \text{for some }\ s\in (1/2,1]\ \text{and}\ 1\leq i<j\leq n\right].
\end{aligned}
\end{multline} 
By Theorem \ref{BiLaXicapacity}, there exists $c_1>0$, such that 
\begin{multline}\label{eq:hittcapacitybound}
\Pb\left[X(s)\in T^{-H}(\Sc_{deg}^d-A^1)\ \ \text{for some }\ s\in (1/2,1]\ \text{and}\ 1\leq i<j\leq n\right]\\
  \geq c_1\Cc_{n_1(d)-\frac 1H }(T^{-H}(\Sc_{deg}^d-A^1)).
\end{multline}
  
Let $G:(-1,1)^{n_1(d)-2}\rightarrow\Sc ^d_{deg} -A^1$ be a parametrization of the manifold $\Sc_{deg}^d-A^1$ around zero. Consider the probability measure $m_{\varepsilon}(dx):=(2\varepsilon)^{2-n_1(d)}\Indi{[-\varepsilon,\varepsilon]^{n_1(d)-2}}(x)dx$  and let $\nu_{\varepsilon}(dx)$ be the pullback measure of $m_{\varepsilon}$ under the map $x\mapsto \varepsilon^{-1}G(x)$. Define $f_{\alpha}$ by \eqref{eq:falphadef}. Since $\nu_{T^H}(dx)$ is a probability measure with support in $T^{-H}(\Sc_{deg}^d-A^1)$, we have 
\begin{align}\label{eq:capacitybound}
\Cc_{n_1(d)-\frac 1H }(T^{-H}(\Sc_{deg}^d-A^1))
  &\geq \bigg(\int_{T^{-H}(\mathcal{S}_{deg}^d-A^1)}f_{n_1(d)-\frac 1H}(\|u-v\|)\nu_{T^{H}}(du)\nu_{T^{H}}(dv)\bigg)^{-1}\nonumber\\
	&\geq \bigg((2T^{H})^{2(2-n_{1}(d))}\int_{(-T^{H},T^{H})^{2(n_1(d)-2)}}f_{n_1(d)-\frac 1H}(T^{-H}\|G(x)-G(y)\|  )dxdy\bigg)^{-1}\nonumber\\
	&= 2^{2(n_{1}(d)-2)}\bigg(\int_{(-1,1)^{2(n_1(d)-2)}}f_{n_1(d)-\frac 1H}(T^{-H}\|G(T^{H}x)-G(T^{H}y)\|)dxdy\bigg)^{-1}.
\end{align}
By the mean value theorem, there exists $\tau\in(0,1)$, depending on $T$, such that the vector $v(\tau):=\tau(1-x)+\tau y$ satisfies
\begin{align}\label{eq:DGvmean}
T^{-H}(G(T^{H}x)-G(T^{H}y))
  &=T^{-H}\frac{d}{d\tau}G(T^{H}(\tau(1-x)+\tau y))=DG_{v(\tau)}[x-y].
\end{align}
Consider the mapping $(w,v)\mapsto\|DG_{v}[w]\|$, defined over the compact set $K:=\{(w,v) : \|w\|=1,\ \ \text{ and }\ v\in[-T^{H},T^{H}]^{n_1(d)-2}\}$. By the smoothness of $G$, this mapping has a minimizer $(w_{0},\tau_0)$. Moreover, since $DG_v$ is injective for $v$ near zero, we have that $\delta:=\|DG_{v_0}[w_0]\|>0$. Using this observation as well as relation \eqref{eq:DGvmean}, we get that 
\begin{align*}
T^{-H}\|G(T^{H}x)-G(T^{H}y)\|
  &=\|x-y\| \|DG_{v(\tau)}[\|x-y\|^{-1}(x-y)]\|\\
	&\geq\delta\|y-x\|.
\end{align*}
Therefore, by \eqref{eq:capacitybound}, it follows that if $n_1(d)>\frac 1H$, 
\begin{align*}
\Cc_{n_1(d)-\frac 1H}(T^{-H}(\Sc_{deg}^d-A^1))
  &\geq  (2\delta)^{2(n_1(d)-\frac{1}{H})}\bigg(\int_{(-1,1)^{2(n_1(d)-2)}}\|x-y\|^{\frac 1H-n_1(d)}dxdy\bigg)^{-1}.
\end{align*}
The integral in the right-hand side is finite due to the condition $\frac 1H>2$, and thus,  there exists a constant $\delta^{\prime}>0$, such that 
\begin{align}\label{ineq:Capdeltaprime}
\Cc_{n_1(d)-\frac 1H}(T^{-H}(\Sc_{deg}^d-A^1))\geq\delta^{\prime}>0.
\end{align}
By following a similar approach, we can show that \eqref{ineq:Capdeltaprime} also holds for the case $n_1(d)=\frac 1H$, while in the case $n_{1}(d)<\frac 1H$,  identity \eqref{ineq:Capdeltaprime} follows from the fact that $f_{\alpha}=1$ for all $\alpha>0$. Combining \eqref{eq:hittingpselfs}, \eqref{eq:hittcapacitybound} and \eqref{ineq:Capdeltaprime}, we conclude that there exists $\delta^{\prime}>0$ such that for all $T\in(0,1)$,
(\ref{eq:Pbhittingprobfinalprev}) holds.

\medskip
\noindent
{\it (ii)}  Next we show that relation \eqref{eq:Pbhittingprobfinalprev} holds as well in the case $A=0$, if $\delta^{\prime}>0$ is sufficiently small. Notice that if $A=0$, by the self-similarity of  $\xi$ and the homogenity of the function $(\Phi_{1},\dots, \Phi_{d})$,  we have
\begin{multline*}
\Pb\left[\lambda^1_{i}(t)=\lambda^1_{j}(t)\ \ \text{for some }\ t\in (0,T]\ \text{and}\ 1\leq i<j\leq n\right]\\
\begin{aligned}
  &=\Pb\left[\Phi_{i}(X^{1}(t))=\Phi_{j}(X^{1}(t))\ \ \text{for some }\ t\in (0,T]\ \text{and}\ 1\leq i<j\leq n\right]\\
	&=\Pb\left[\Phi_{i}(T^{H}X^{1}(t))=\Phi_{j}(T^HX^{1}(t))\ \ \text{for some }\ t\in (0,1]\ \text{and}\ 1\leq i<j\leq n\right]\\
	&=\Pb\left[\lambda^1_{i}(t)=\lambda^1_{j}(t)\ \ \text{for some }\ t\in (0,1]\ \text{and}\ 1\leq i<j\leq n\right]\\
	&\geq \Pb\left[\lambda^1_{i}(t)=\lambda^1_{j}(t)\ \ \text{for some }\ t\in [1/2,1]\ \text{and}\ 1\leq i<j\leq n\right].
\end{aligned}
\end{multline*}
Relation \eqref{eq:Pbhittingprobfinalprev} for $A=0$ then follows from Theorem \ref{thm:main}.

\medskip
\noindent
{\it Step 2.} 
By taking $T\rightarrow0$ in the left hand side of \eqref{eq:Pbhittingprobfinalprev}, we get
\begin{align}\label{eq:Pbhittingprobfinal}
\Pb\left[\bigcap_{T\in(0,1)}\{\lambda^1_{i}(t)=\lambda^1_{j}(t)\ \ \text{for some }\ t\in (0,T]\ \text{and}\ 1\leq i<j\leq n\}\right]
  &\geq\delta^{\prime}>0.
\end{align}
Finally, for $i\leq j$, we write $\xi_{i,j}$ as a Volterra process of the form $\xi_{i,j}(t)=\int_{0}^{t}K_{H}(s,t)dW_{i,j}(t)$, where the  $\{W_{i,j}(t) ; t\geq0\}$  are independent  standard Brownian motions and 
\begin{align*}
K_{H}(s,t)
  &:=c_{H}\bigg(\big(t/s\big)^{H-\frac{1}{2}}(t-s)^{H-\frac{1}{2}}-(H-1/2)s^{\frac{1}{2}-H}\int_{s}^tu^{H-\frac{3}{2}}(u-s)^{H-\frac{1}{2}}du\bigg),
\end{align*}
where $c_{H}:=(2H)^{-\frac{1}{2}}(1-2H)\int_{0}^1(1-x)^{-2H}x^{H-\frac{1}{2}}dx$. We can easily check that
\begin{align*}
\bigcap_{T\in(0,1)}\{\lambda^1_{i}(t)=\lambda^1_{j}(t)\ \ \text{for some }\ t\in (0,T]\ \text{and}\ 1\leq i<j\leq n\}
\end{align*}
belongs to the germ $\sigma$-algebra $\mathcal{F}_{0+}:=\bigcap_{s>0}\sigma\{W_{i,j}(u) ; 0<u\leq s, 0\le i\le j \le d\}$.	Thus, combining \eqref{eq:Pbhittingprobfinal} with  Blumenthal's zero-one law, we conclude that
\begin{align*}
\Pb\left[\bigcap_{T\in(0,1)}\{\lambda^1_{i}(t)=\lambda^1_{j}(t)\ \ \text{for some }\ t\in (0,T]\ \text{and}\ 1\leq i<j\leq n\}\right]
  &=1.
\end{align*}
The proof is now complete.
\end{proof}


\end{document}